\newtheorem{theorem}{Theorem}[section]
\newtheorem{proposition}[theorem]{Proposition}
\newtheorem{definition}[theorem]{Definition}
\newtheorem{corollary}[theorem]{Corollary}
\newtheorem{remark}[theorem]{Remark}
\newtheorem{lemma}[theorem]{Lemma}
\newcommand{\Z}{ {\mathbb Z} }
\newcommand{\C}{ {\mathbb C} }
\newcommand{\K}{ {\Bbbk} }
 \newcommand{\Q}{{\mathbf{Qu}}}
 \newcommand{\G}{{\mathbf{Grp}}}
\begin{document}

\title{Groups and quandles}
\keywords{Groups, Quandles, Adjunction}

\author{Mohamad \textsc{Maassarani} }
\maketitle
\begin{abstract}
We are intereseted in quandles and their enveloping groups. Various results are proven. We show that a quandle $Q$ and its image in the enveloping group $G(Q)$ have isomorphic enveloping groups. The image quandle is injective. For $Q$ a finite quandle, we show that $G(Q)$ admits a faithfull representation $\rho : G(Q)\to GL_n(\Z)$ for some $n$; an irreducible representation of $G(Q)$ over $\C$ is finite dimensional an its degree divides the order of the group $Inn(Q)$ of inner automorphism of $Q$ and is bounded by $\sqrt{\vert Inn(Q) \vert }$. We determine the Malcev Lie algebra and the rational cohomology ring of $G(Q)$ for $Q$ finite. We prove that a finite injective quandle is a subquandle (for conjugacy) of a finite group. We also prove that the only finite subquandles (for conjugacy) of uniquely divisible groups are trivial quandles and that morphisms from quandles to nilpotent groups (for conjugacy) are constant on the indecomposable components. Implication of these results are considered.  
\end{abstract}
\section*{Introduction and main results}
A quandle is a set equipped with a binary operation satisfying some axioms. For instance, quandles are invariants of knots. Given a group $G$ one can construct a quandle $Conj(G)$ over the set $G$ by setting $x\triangleright y= xyx^{-1}$ for $x,y\in G$. A group morphism is a quandle morphism for the structure we just defined. Hence we have a functor $Conj$ from the category of groups $\G$ to the category of quandles $\Q$. For every quandle $Q$ there is a group $G(Q)$ called the enveloping group (or associated, adjpint, structure group) and quandle morphism $\varphi_Q:Q\to G(Q)$ which is universal with respect to quandle morphism from $Q$ into quandles of the form $Conj(G)$ for some group $G$. The assignement $Q\mapsto G(Q)$ is functorial and the corresponding functor $G:\Q \to \G$ is the left adjoint of $Conj$. These are known facts in the literature (\cite{DJ},\cite{Br},\cite{FR}) and will be reviewed. \\\\
In these notes we are interseted in the relation between quandles and groups and the enveloping groups of finite quandles. We have six short sections. We will summarize the content :\\\\
In the first section, we recall basic definitions as the definition of a quandle, the definition of a linear representation of a quandle. We alsp recall some basic facts on quandles and fix few notations.\\
In the second section we review the construction of $G(Q)$, the universal morphism $\varphi_Q : Q\to G(Q)$ for $Q$ a quandle and the adjuction betwenn $Conj$ and $G(-)$.\\\\
The map $\varphi_Q :Q\to G(Q)$ is not always injective. If it is the quandle is called injective. The third subsection is devoted to the image quandle $\varphi_Q(Q)$ which we denote by $Q_{conj}$. We show that $Q_{conj}$ is an injective quandle and that the enveloping groups $G(Q)$ and $G(Q_{conj})$ are naturaly isomorphic. The assignement $Q\to Q_{conj}$ is functorial and we have a natural equivalence betwen the functors $G(-)$ and $G\circ F(-)$, where $F$ is the functor corresponding to the assignement $Q\to Q_{conj}$.\\\\
We prove in section 4, that for $Q$ finite $G(Q)$ admits a faithfull representation $\rho : G(Q) \to GL_n(\Z)$, for some $n$. This proves that every finite injective quandle admits an injective representation $\rho : Q \to Conj(GL_n(\Z))$ for some $n$. The linearity of $G(Q)$ for $Q$ finite implies that $Q$ is a subquandle for conjugacy of some finite group. We also show that irreducible representations of a finite quandle $Q$ over $\C$ (equivalently irreducible representations of $G(Q)$) are finite dimensional and that the degree of such representations divides the order of the group $Inn(Q)$ of inner automorphisms of $Q$; moreover, the degree is bounded by $\sqrt{\vert Inn(Q) \vert }$.\\\\
In section 5, we show that a finite subquandle of a uniquely divisible group is trivial, (i.e. $x\triangleright y=y$) and derive some implications from it. We show that a quandle morphism to $Conj(G)$ for $G$ a nilpotent group is constant on indecomposable components of the source. We also prove that an injective quandle is nilpotent if and only if it is a subquandle for conjugacy of some nilpotent group. The group is finite if the quandle is finite, where we also describe the structure of $G(Q)$ for $Q$ a finite nilpotent quandle.\\\\
Finaly, we determine the Malcev Lie algebra and the rational cohomology ring of $G(Q)$ for $Q$ finite. This is done in the last section.

\section{Quandles, Quandle functor and linear representation of quandles}
\begin{definition}
\begin{itemize}
\item[1)] A quandle is set $Q$ equipped with a binary operation $\triangleright:Q\times Q \to Q$  such that :
\begin{itemize}
\item[a)] $x\triangleright x=x$ for $x\in Q$.
\item[b)] $x \triangleright( y\triangleright z)= (x \triangleright y) \triangleright (x \triangleright z)$, for $x,y,z \in Q$. 
\item[c)] For all $x,y \in Q$ there exist a unique $z\in Q$, such that $x \triangleright z=y$.
\end{itemize}
\item[2)] A subquandle of $(Q,\triangleright)$ is a subset $Q'$ such that $\triangleright$ restricts to quandle structure on $Q'$.
\item[3)] A quandle morphism is a map between two quandles $(Q,\triangleright_Q)$ and $(Q' ,\triangleright_{Q'})$ such that $f(z \triangleright_Q w)=f(z) \triangleright_{Q'} f(w)$, for $z,w \in Q$.
\end{itemize}
\end{definition}
The identity map of a quandle $Q$ is a quandle morphism and the composition of two quandle morphism is a quandle morphism. Hence quandles form a category $\Q$ whose objects are quandles and morphisms are quandle morphisms.
\begin{proposition}
Let $Q\to Q'$ be a quandle morphism. The image of $Q$ is a subquandle of $Q'$.
\end{proposition}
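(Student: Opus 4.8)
The plan is to verify directly that $f(Q)$ satisfies the definition of a subquandle given in Definition 1.1(2): that the operation $\triangleright_{Q'}$ restricts to $f(Q)$ and that the restriction satisfies the three quandle axioms. First I would check closure. For $a,b\in f(Q)$, write $a=f(x)$ and $b=f(y)$ with $x,y\in Q$; the morphism property then gives $a\triangleright_{Q'} b = f(x)\triangleright_{Q'} f(y)=f(x\triangleright_Q y)$, which again lies in $f(Q)$. Thus $\triangleright_{Q'}$ genuinely restricts to a binary operation on $f(Q)$.

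Next, axioms (a) and (b) are inherited for free. They are universally quantified identities valid for all elements of $Q'$, so in particular they hold for elements of the subset $f(Q)$; since the operation on $f(Q)$ is literally the restriction of $\triangleright_{Q'}$, the identities continue to hold with the same values and no separate argument is needed.

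The only axiom requiring genuine work is (c), the unique right-division property, and this is where I expect the main obstacle to lie. Uniqueness is immediate from $Q'$: if $z,z'\in f(Q)$ both satisfy $a\triangleright_{Q'} z=b$, then uniqueness in $Q'$ forces $z=z'$. The delicate point is \emph{existence within the image}, since a priori the unique solution $z\in Q'$ of $a\triangleright_{Q'} z=b$ need not lie in $f(Q)$. To produce a solution inside the image I would lift the problem to the source: with $a=f(x)$ and $b=f(y)$, axiom (c) in $Q$ yields a (unique) $w\in Q$ with $x\triangleright_Q w=y$, whence $f(x)\triangleright_{Q'} f(w)=f(x\triangleright_Q w)=f(y)=b$. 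Therefore $z:=f(w)\in f(Q)$ solves the equation, and combined with uniqueness in $Q'$ it is the unique such element, in particular the unique one lying in $f(Q)$. This establishes (c) for $f(Q)$ and completes the verification that $f(Q)$ is a subquandle of $Q'$.
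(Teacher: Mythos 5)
Your proof is correct and complete; the paper states this proposition without proof, and your argument is exactly the standard direct verification one would supply. In particular you correctly identify that the only nontrivial point is the existence half of axiom (c) inside the image, and your lifting argument handles it properly.
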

 For $G$ a group the binary operation $x\triangleright y= xyx^{-1}$ defines a quandle structure on $G$. The associated quandle will be denoted by $Conj(G)$ and will be called the \textit{conjugacy quandle of $G$} . A group morphism $f:G\to H$  is a quandle morphism with respect to the conjugacy quandle strucutre just defined. Hence, a group morphism is also a morphism $f: Conj(G)\to Conj(H)$ in the category of quandles. 
\begin{proposition}
Denote by $\G$ the category of groups. We have a functor $Conj: \G \to \Q$ defined by $G\mapsto Conj(G)$ and $Conj(f)=f$ for $G\in \G$ and $f$ a morphism in $\G$. We call $Conj$ the $\mathit{conjugacy\  quandle \ functor}$.
\end{proposition}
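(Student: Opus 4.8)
The plan is to verify the three requirements for $Conj$ to be a functor: that it assigns to each object of $\G$ a well-defined object of $\Q$, that it assigns to each morphism of $\G$ a well-defined morphism of $\Q$, and that it respects identities and composition. The first two points have been essentially recorded in the discussion preceding the statement, so the real work is to assemble them and then observe that the functor axioms come for free from the fact that $Conj$ acts as the identity on underlying set maps.

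First I would check that $Conj(G)$ is a quandle for every group $G$. With $x \triangleright y := xyx^{-1}$ one has $x \triangleright x = xxx^{-1} = x$, which is axiom (a); for axiom (b) one expands $x \triangleright (y \triangleright z) = x(yzy^{-1})x^{-1}$ and $(x\triangleright y)\triangleright(x\triangleright z) = (xyx^{-1})(xzx^{-1})(xyx^{-1})^{-1}$ and checks that both reduce to $xyzy^{-1}x^{-1}$; for axiom (c), given $x,y \in G$ the equation $x \triangleright z = y$ has the unique solution $z = x^{-1}yx$, uniqueness being immediate from cancellativity in $G$.

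Second, given a group morphism $f : G \to H$, I would verify that the same underlying map is a morphism $Conj(G) \to Conj(H)$ in $\Q$: for $x,y \in G$,
\[
f(x \triangleright y) = f(xyx^{-1}) = f(x)\,f(y)\,f(x)^{-1} = f(x) \triangleright f(y),
\]
using that $f$ preserves products and inverses. This shows that setting $Conj(f) := f$ gives a well-defined quandle morphism.

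Finally, the functoriality axioms are immediate precisely because $Conj$ leaves the underlying set maps unchanged: $Conj(\mathrm{id}_G) = \mathrm{id}_G = \mathrm{id}_{Conj(G)}$, and for composable group morphisms $f,g$ we get $Conj(g \circ f) = g \circ f = Conj(g) \circ Conj(f)$, where composition on the right is taken in $\Q$ but coincides with composition of the underlying maps. I do not expect any genuine obstacle in this argument; the only substantive step is the one-line computation in the previous paragraph showing that a group morphism preserves the conjugation operation, and every other property is inherited formally from the structure of the category of groups.
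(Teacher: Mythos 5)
Your proposal is correct and follows exactly the route the paper takes: the paper gives no formal proof, relying on the preceding remarks that $x\triangleright y = xyx^{-1}$ defines a quandle structure and that group morphisms preserve it, with functoriality being automatic since $Conj$ is the identity on underlying maps. Your write-up simply makes these verifications explicit, and all the computations check out.
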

In \cite{rep}, the authors introduced linear representations and subrepresentations of quandles (more generally racks) :
\begin{definition}
\begin{itemize}
\item[1)] A linear representation of a quandle $Q$ over a vector space $V$ is a quandle morphism $\rho: Q \to Conj(\mathrm{GL}(V))$.
\item[2)] A subrepresentation of $\rho$ is a vector subspace $W\subset V$, such that $\rho(x)(W) \subset W$ for all $x\in Q$.
\item[3)] An irreducible representation is a representation $V$ that has no subrepresentations other than $0$ and $V$.
\end{itemize}
\end{definition}
\section{Adjoint functor to the conjugacy quandle functor}

For $X$ a set we denote by $FX$ the free group on $X$ and by $i_X :X\to FX$ the inclusion given by $i_X(x)=x$ for $x\in X$.
\begin{definition}
\begin{itemize}
\item[1)] For $Q$ a quandle, the conjugacy group $G(Q)$ of $Q$ is the group defined as the quotient of $FQ$ by the relations $xyx^{-1}(x\triangleright y)^{-1}=1$, for $x,y\in Q$.
\item[2)] For $Q$ a quandle $\varphi_Q :Q\to G(Q)$ is the composite of $i_Q$ by the quotient map $FQ\to G_Q$.
\end{itemize}
\end{definition}
The group $G(Q)$ is sometimes called the enveloping group of $Q$, the associated group, adjoint group or structure group. For $Q$ the trivial quandle ($x\triangleright y=y$) the group $G(Q)$ is isomorphic to $\Z Q$ the free abelian group on $Q$. 
\begin{proposition}
Let $Q$ be a quandle.
\begin{itemize}
\item[1)] The group $G(Q)$ is infinite.
\item[2)] The elements of $\varphi_Q(Q)$ and products of these elements are of infinite order.
\item[3)] If $Q$ is finite (as a set) then $G(Q)$ is a finitely presented group.
\item[4)] The map $\varphi_Q:Q\to G(Q)$, is a quandle morphism if $G(Q)$ is regarded as the conjugacy quandle $Conj(G(Q))$.
\end{itemize}
\end{proposition}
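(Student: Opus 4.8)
The plan is to reduce all four assertions to a single elementary tool, the exponent-sum homomorphism. Consider the map $\varepsilon \colon FQ \to \Z$ from the free group on $Q$ that sends each generator $x \in Q$ to $1$; concretely, $\varepsilon$ records the total exponent sum of a reduced word. Each defining relator of $G(Q)$ has the form $xyx^{-1}(x\triangleright y)^{-1}$, whose exponent sum is $1+1-1-1=0$, so every relator lies in $\ker \varepsilon$. Hence $\varepsilon$ descends to a homomorphism $\bar\varepsilon \colon G(Q) \to \Z$ satisfying $\bar\varepsilon(\varphi_Q(x)) = 1$ for every $x \in Q$. Checking that $\bar\varepsilon$ is well defined is the one point that genuinely uses the shape of the relations, and it is what makes parts 1) and 2) work; everything after is bookkeeping.

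For part 1), since $Q$ is nonempty the image of $\bar\varepsilon$ contains $1$, so $\bar\varepsilon$ is surjective and $G(Q)$ maps onto the infinite group $\Z$; therefore $G(Q)$ is infinite. For part 2), if $g = \varphi_Q(x_1)\cdots\varphi_Q(x_k)$ is a product of $k \ge 1$ elements of $\varphi_Q(Q)$, then $\bar\varepsilon(g) = k > 0$, so $\bar\varepsilon(g^n) = nk \neq 0$ for every $n \neq 0$, whence $g^n \neq 1$ and $g$ has infinite order. Taking $k=1$ yields the claim for the elements of $\varphi_Q(Q)$ themselves. The hypothesis that the word be a genuine product, with no inverses, is essential here, as $\varphi_Q(x)\varphi_Q(x)^{-1}=1$ shows.

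Part 3) is purely formal: when $Q$ is finite the free group $FQ$ is finitely generated, and the relators are indexed by the finite set $Q \times Q$, so the presentation of $G(Q)$ given in the definition is already finite. Part 4) is immediate from the defining relations: for $x,y \in Q$ the relator $xyx^{-1}(x\triangleright y)^{-1}=1$ reads, in $G(Q)$, as $\varphi_Q(x)\varphi_Q(y)\varphi_Q(x)^{-1} = \varphi_Q(x\triangleright y)$, and the left-hand side is exactly $\varphi_Q(x) \triangleright \varphi_Q(y)$ in $Conj(G(Q))$; thus $\varphi_Q(x\triangleright y) = \varphi_Q(x)\triangleright\varphi_Q(y)$, which is precisely the statement that $\varphi_Q$ is a quandle morphism into $Conj(G(Q))$.

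I expect no serious obstacle. The only place requiring care is verifying that the relators have exponent sum zero, so that $\bar\varepsilon$ descends to the quotient, together with the standard convention that quandles are nonempty, which part 1) needs.
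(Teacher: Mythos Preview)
Your proof is correct and follows essentially the same approach as the paper: both define the exponent-sum homomorphism $FQ\to\Z$ sending each generator to $1$, observe that the relators $xyx^{-1}(x\triangleright y)^{-1}$ lie in its kernel so that it descends to $G(Q)\to\Z$, and deduce all four parts from this map together with the shape of the defining relations. Your write-up is in fact slightly more careful, making explicit the nonemptiness convention needed for part 1) and the no-inverses caveat in part 2).
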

\begin{proof}
We prove $1)$. Let $L:FQ \to \Z$ be the morphism defined by $x\mapsto 1$ for $x\in Q$. For $x,y \in Q$, the elements $xyx^{-1}(x\triangleright y)^{-1}$ lie in the kernel of $L$. Therefore, $L$ induces a morphism $\bar{L} : G(Q)\to \Z$ mapping $\varphi_Q(x)$ to $1$ for $x\in Q$. $\bar{L}$ is hence surjective and $G(Q)$ is hence infinite. $2)$ follows from the fact that $\bar{L}(\varphi_Q(x))=1$, for $x\in Q$. $3)$ follows immediatly from the definition of $G(Q)$. $4)$ follows from the fact that, for $x,y\in Q$, $\varphi_Q(x\triangleright y)=xyx^{-1}$ in $G(Q)$ because of the relation $xyx^{-1}(x\triangleright y)^{-1}=1$.
\end{proof}

\begin{proposition}\label{univ}
Let $Q$ be a quandle, $G$ be a group and $f:Q\to Conj(G)$ be a quandle morphism. There exist a unique group morphism $G_f:G(Q)\to G$ such that the following diagram commutes :
$$\begin{tikzcd}
G(Q) \arrow{r}{G_f} &G  \\ 
Q \arrow{u}{\varphi_Q} \arrow{ru}{f} & 
\end{tikzcd} $$
\end{proposition}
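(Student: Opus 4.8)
The plan is to exploit the universal property of the free group $FQ$ together with the presentation of $G(Q)$ as a quotient of $FQ$. Regarding $f$ merely as a set map $Q\to G$ and using that $FQ$ is free on the set $Q$, I would first extend $f$ uniquely to a group morphism $\tilde f : FQ\to G$ characterized by $\tilde f\circ i_Q = f$.

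The key verification is that $\tilde f$ annihilates the defining relators of $G(Q)$. For $x,y\in Q$ one computes $\tilde f\bigl(xyx^{-1}(x\triangleright y)^{-1}\bigr) = f(x)f(y)f(x)^{-1}f(x\triangleright y)^{-1}$; since $f:Q\to Conj(G)$ is a quandle morphism, $f(x\triangleright y) = f(x)\triangleright f(y) = f(x)f(y)f(x)^{-1}$ in $Conj(G)$, so the expression collapses to the identity of $G$. This is really the only place where the hypothesis that $f$ respects the quandle operation is used, and it is the crux of the argument.

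Consequently $\tilde f$ factors through the quotient map $FQ\to G(Q)$, yielding a group morphism $G_f:G(Q)\to G$ whose composite with that quotient map equals $\tilde f$. Precomposing with $i_Q$ then gives $G_f\circ\varphi_Q = \tilde f\circ i_Q = f$, which is precisely the asserted commutativity of the triangle.

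For uniqueness, I would observe that $\varphi_Q(Q)$ generates $G(Q)$ as a group, because $Q$ generates $FQ$ and the quotient map $FQ\to G(Q)$ is surjective. Any group morphism $G(Q)\to G$ making the triangle commute must send each generator $\varphi_Q(x)$ to $f(x)$, and two group morphisms agreeing on a generating set coincide; hence $G_f$ is unique. I do not anticipate a genuine obstacle here: the statement is just the universal property of the adjunction, and once the relator computation above is in place the rest is formal.
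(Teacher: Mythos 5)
Your proposal is correct and follows essentially the same route as the paper's proof: extend $f$ to the free group $FQ$, check that the relators $xyx^{-1}(x\triangleright y)^{-1}$ are sent to the identity using the quandle-morphism hypothesis, factor through the quotient, and deduce uniqueness from the fact that $\varphi_Q(Q)$ generates $G(Q)$. No differences worth noting.
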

\begin{proof}
Uniqueness is due to the fact that $\varphi_Q(X)$ generates $G(Q)$. We prove the existence. Let $F_f:FQ\to G$ be the group morphism mapping $x$ to $f(x)$ for $x\in Q$. For $x,y \in Q$, 
$$F_f(xyx^{-1}(x\triangleright y)^{-1})=f(x)f(y)f(x)^{-1}f(x\triangleright y)^{-1}=f(x)f(y)f(x)^{-1}(f(x)f(y)f(x)^{-1})^{-1}=1.$$
Hence, the elements $xyx^{-1}(x\triangleright y)^{-1}$ lie in the kernel of $F_f$ and therefore $F_f$ induces a group morphism $G_f :G(Q)\to G$ such that $G_f(\varphi_Q(x))=F_f(x)=f(x)$, for $x\in Q$. We have proved the proposition.
\end{proof}
\begin{remark}
With the last proposition one sees that linear representations of a quandle $Q$ correspond to linear representations of the group $G(Q)$. As one can check a subrepresentation for $Q$ correspond to a subrepresentation of $G(Q)$. Hence, the study of linear representations of a quandle $Q$ is related to linear representations of the infinite group $G(Q)$.  Since for $Q$ finite $G(Q)$ is finitely presented one has representation varieties associated to $Q$ finite, the representation varieties of $G(Q)$ $($\cite{Mag}$)$
\end{remark}
\begin{corollary}\label{fun}
Let $f:Q\to Q'$ be a quandle morphism
\begin{itemize}
\item[1)] There exist a unique group morphism $G(f) : G(Q) \to G(Q')$ such that the following diagram commutes : 
$$\begin{tikzcd}
G(Q) \arrow{r}{G(f)} &G(Q')  \\ 
Q\arrow{u}{\varphi_Q} \arrow{r}{f} & Q'\arrow{u}{\varphi_{Q'}} 
\end{tikzcd} $$
\item[2)] The morphism $G(f)$ is given by $G(f)(\varphi_Q(x))=\varphi_{Q'}(f(x))$.
\end{itemize}
\end{corollary}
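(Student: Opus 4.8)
The plan is to deduce this directly from the universal property established in Proposition \ref{univ}; the entire content is to feed the right quandle morphism into that proposition. The key observation is that $\varphi_{Q'}$ itself is a quandle morphism into a conjugacy quandle, so precomposing with $f$ produces exactly the kind of map Proposition \ref{univ} accepts.

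First I would note that, by part 4) of the proposition preceding Proposition \ref{univ}, the map $\varphi_{Q'}:Q'\to G(Q')$ is a quandle morphism when $G(Q')$ is regarded as the conjugacy quandle $Conj(G(Q'))$. Since $f:Q\to Q'$ is a quandle morphism and the composition of two quandle morphisms is again a quandle morphism, the composite $\varphi_{Q'}\circ f:Q\to Conj(G(Q'))$ is a quandle morphism. I would then apply Proposition \ref{univ} with the target group taken to be $G=G(Q')$ and the given morphism taken to be $\varphi_{Q'}\circ f$. This yields a unique group morphism, which I name $G(f):G(Q)\to G(Q')$, satisfying $G(f)\circ\varphi_Q=\varphi_{Q'}\circ f$. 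This identity is precisely the commutativity of the square in 1), and both existence and uniqueness are inherited verbatim from Proposition \ref{univ} (uniqueness ultimately because $\varphi_Q(Q)$ generates $G(Q)$). Part 2) is then just the pointwise reading of this square: evaluating $G(f)\circ\varphi_Q=\varphi_{Q'}\circ f$ at $x\in Q$ gives $G(f)(\varphi_Q(x))=\varphi_{Q'}(f(x))$.

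I do not foresee any genuine obstacle, as the argument is formal once the setup is correct. The only point requiring care is bookkeeping: one must apply the universal property to the group $G(Q')$ rather than to $Q'$, and to the composite $\varphi_{Q'}\circ f$ rather than to $f$ alone. Verifying that $\varphi_{Q'}\circ f$ lands in $Conj(G(Q'))$ and is a quandle morphism is the single substantive (but routine) check; after that, existence, uniqueness, commutativity, and the explicit formula all drop out of Proposition \ref{univ}.
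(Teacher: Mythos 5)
Your proposal is correct and follows exactly the route the paper takes: apply Proposition \ref{univ} to the composite quandle morphism $\varphi_{Q'}\circ f : Q \to Conj(G(Q'))$, obtaining $G(f)=G_{\varphi_{Q'}\circ f}$, with part 2) read off from the commuting square and uniqueness coming from the fact that $\varphi_Q(Q)$ generates $G(Q)$. Your write-up is in fact slightly more explicit than the paper's about why the composite is a quandle morphism into the conjugacy quandle, but the argument is the same.
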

\begin{proof}
This is an application of the previous proposition since $\varphi_{Q'}$ is a quandle morphism. We have $G(f)=G_{\varphi_Y \circ f}$. The fact the equation in $2)$ defines the morphism is due to the fact that $\varphi_Q(Q)$ generates $G(Q)$.
\end{proof}

\begin{corollary}\label{fund}
We have a faithfull functor $G: \Q \to \G$ given by $Q\mapsto G(Q)$ and $f\mapsto G(f)$ for $Q\in \Q$ and $f$ a morphism in $\Q$.
\end{corollary}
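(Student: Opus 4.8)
The plan is to check the two functoriality axioms and then injectivity of $G$ on each morphism set, using throughout that $\varphi_Q(Q)$ generates $G(Q)$ together with the formula $G(f)(\varphi_Q(x)) = \varphi_{Q'}(f(x))$ of Corollary \ref{fun}. For the identity, this formula gives $G(\mathrm{id}_Q)(\varphi_Q(x)) = \varphi_Q(x)$ for all $x \in Q$, and a group endomorphism of $G(Q)$ fixing the generating set $\varphi_Q(Q)$ must be $\mathrm{id}_{G(Q)}$. For composition, given $f \colon Q \to Q'$ and $g \colon Q' \to Q''$, both $G(g \circ f)$ and $G(g) \circ G(f)$ send $\varphi_Q(x)$ to $\varphi_{Q''}(g(f(x)))$ — the former directly, the latter by applying the formula twice — so they agree on generators and therefore coincide. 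This exhibits $G$ as a functor.

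Next I would reduce faithfulness to a single injectivity statement about the unit. If $f, g \colon Q \to Q'$ satisfy $G(f) = G(g)$, then evaluating on each generator $\varphi_Q(x)$ and using Corollary \ref{fun} yields $\varphi_{Q'}(f(x)) = \varphi_{Q'}(g(x))$ for all $x \in Q$; conversely, such equalities force $G(f) = G(g)$ since the $\varphi_Q(x)$ generate. Hence $f \mapsto G(f)$ is injective on every $\mathrm{Hom}_{\Q}(Q, Q')$ precisely when the unit $\varphi_{Q'}$ separates the points of $Q'$, and taking $Q$ to be the one-point quandle — the free quandle on one generator, whose morphisms into $Q'$ are exactly the elements of $Q'$ — shows that this separation is also necessary. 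Thus the corollary is equivalent to the assertion that $\varphi_{Q'} \colon Q' \to G(Q')$ is injective for \emph{every} quandle $Q'$, and this is the one point on which the statement rests.

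The main obstacle is precisely this universal injectivity of the unit. By the universal property (Proposition \ref{univ}), $\varphi_{Q'}$ is injective as soon as $Q'$ admits a single injective quandle morphism into some $\mathrm{Conj}(H)$, because that morphism then factors through $\varphi_{Q'}$; so the plan is to construct, for an arbitrary $Q'$, a group $H$ and an embedding $Q' \hookrightarrow \mathrm{Conj}(H)$. The natural candidate $x \mapsto \rho_x$, where $\rho_x(y) = x \triangleright y$, is a quandle morphism $Q' \to \mathrm{Conj}(\mathrm{Inn}(Q'))$, but it is injective only when $Q'$ is faithful and so does not settle the general case. Producing a uniform embedding that also separates elements with the same inner action is, I expect, where essentially all the difficulty lies: it is exactly the injectivity phenomenon that the paper isolates under the name of an \emph{injective} quandle, and it is the step on which the literal global faithfulness of $G$ stands or falls.
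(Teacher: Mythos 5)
Your verification of functoriality is exactly the paper's own proof: composition of morphisms is handled by the uniqueness clause of Corollary \ref{fun}, and identities by its point $2)$ together with the fact that $\varphi_Q(Q)$ generates $G(Q)$. That part is correct and complete.

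On faithfulness, your reduction is correct and sharper than the paper itself, whose proof of this corollary in fact never addresses faithfulness at all --- it only verifies the two functor axioms. But the ``main obstacle'' you isolate is not a difficulty to be overcome; it is an impossibility, and you should push your own analysis one step further to see it. You correctly show that faithfulness of $G$ is \emph{equivalent} to injectivity of $\varphi_{Q'}$ for every quandle $Q'$ (the one-point quandle supplies the necessity). Now the paper's Section \ref{S3} exhibits a quandle $Q_3=\{x,y,z\}$ with $x\triangleright y=y$ and $y\triangleright x=z\neq x$, which forces $\varphi_{Q_3}(x)=\varphi_{Q_3}(z)$; so the unit is \emph{not} universally injective. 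Combining this with your one-point-quandle argument: the two constant quandle morphisms $\{*\}\to Q_3$ at $x$ and at $z$ are distinct, yet both are sent by $G$ to the group morphism $\Z\simeq G(\{*\})\to G(Q_3)$ taking the generator to $\varphi_{Q_3}(x)=\varphi_{Q_3}(z)$, so $G$ is not injective on $Hom_\Q(\{*\},Q_3)$. Hence the word ``faithful'' in the corollary is false as stated, and no proof of it exists; what is true, and what follows immediately from your reduction, is that $G$ is a functor and that $G(f)=G(g)$ implies $f=g$ whenever the common target lies in the full subcategory $\Q^{inj}$ of injective quandles. Your write-up should therefore end not with an attempted embedding of an arbitrary quandle into some $Conj(H)$ (which cannot exist in general), but with $Q_3$ as a counterexample to the faithfulness claim.
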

\begin{proof}
The uniqueness of $G(f)$ in the previous corollary implies that $G(f_1\circ f_2)=G(f_1)\circ G(f_2)$ for $f_1$ $f_2$ composable morphisms in $\Q$. Point $2)$ of the previous corollary implies that $G(\mathrm{id}_Q)=\mathrm{id}_{G(Q)}$ since $\varphi_Q(Q)$ generates $G(Q)$.
\end{proof}
For $Q \in \Q$ and $G \in \G$ define $\theta_{Q,G}: Hom_\G(G(Q),G) \to Hom_\Q(Q,Conj(G))$ by $\theta_{Q,G}(f)=f\circ \varphi_Q$.
\begin{lemma}
\begin{itemize}
\item[1)] For all $Q\in \Q$ and $G\in\G$, $\theta_{Q,G}$ is a bijection.
\item[2)] $\theta_{Q,G}$ is natural in $Q$ and $G$.
\end{itemize}
\end{lemma}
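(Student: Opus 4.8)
The plan is to read off everything from the universal property recorded in Proposition~\ref{univ} and the functoriality of Corollary~\ref{fun}; no new argument about the presentation of $G(Q)$ is needed. For part 1), I would exhibit an explicit inverse to $\theta_{Q,G}$. Given a quandle morphism $f:Q\to Conj(G)$, Proposition~\ref{univ} produces a group morphism $G_f:G(Q)\to G$ with $G_f\circ\varphi_Q=f$, and this $G_f$ is the \emph{unique} group morphism with that property. So I define $\psi_{Q,G}:Hom_\Q(Q,Conj(G))\to Hom_\G(G(Q),G)$ by $\psi_{Q,G}(f)=G_f$ and check the two round trips. First, $\theta_{Q,G}(\psi_{Q,G}(f))=G_f\circ\varphi_Q=f$ by the commuting triangle in Proposition~\ref{univ}. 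Conversely, given $h\in Hom_\G(G(Q),G)$, the composite $h\circ\varphi_Q$ is a quandle morphism (because $\varphi_Q$ is one into $Conj(G(Q))$ and $h$, being a group morphism, induces a quandle morphism $Conj(G(Q))\to Conj(G)$), and $h$ itself is a group morphism with $h\circ\varphi_Q=\theta_{Q,G}(h)$; by the uniqueness clause of Proposition~\ref{univ} we get $G_{\theta_{Q,G}(h)}=h$, i.e. $\psi_{Q,G}(\theta_{Q,G}(h))=h$. Hence $\theta_{Q,G}$ is a bijection.

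For part 2), naturality in $G$ is a one-line verification. For a group morphism $\beta:G\to G'$, the induced maps on the Hom-sets are post-composition with $\beta$ (using that $Conj(\beta)=\beta$ on underlying maps), and for $f\in Hom_\G(G(Q),G)$ both routes around the naturality square send $f$ to $\beta\circ f\circ\varphi_Q$, so the square commutes.

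Naturality in $Q$ is the only place where I must invoke something beyond formal manipulation. Fix a quandle morphism $\alpha:Q'\to Q$; then the relevant square involves precomposition by $G(\alpha)$ on the left and by $\alpha$ on the right (the first Hom-slot being contravariant). For $f\in Hom_\G(G(Q),G)$, going right-then-down yields $f\circ\varphi_Q\circ\alpha$, while going down-then-right yields $f\circ G(\alpha)\circ\varphi_{Q'}$. These agree precisely because $\varphi_Q\circ\alpha=G(\alpha)\circ\varphi_{Q'}$, which is exactly the commuting square of Corollary~\ref{fun}. I expect the only delicate point to be the bookkeeping of this contravariant variance in $Q$ and the correct identification of the induced maps on Hom-sets; once that is set up, both naturality statements drop out immediately from Proposition~\ref{univ} and Corollary~\ref{fun}, so there is no substantive obstacle.
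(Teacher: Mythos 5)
Your proposal is correct and follows essentially the same route as the paper: part 1) is exactly the existence/uniqueness clause of Proposition~\ref{univ} (your explicit inverse $\psi_{Q,G}$ is just a repackaging of "surjective by existence, injective by uniqueness"), naturality in $Q$ reduces to the commuting square of Corollary~\ref{fun}, and naturality in $G$ to the fact that $Conj$ acts as the identity on underlying maps. No gaps.
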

\begin{proof}
 The morphism $\theta_{Q,G}$ is surjective by the existence part of proposition \ref{univ} and is injective by the uniqueness part of the same proposition. This proves $1)$. The naturality of $\theta_{Q,G}$ in $Q$ is equivalent to the statement :
$$ h\circ \varphi_{Q'} \circ f=h\circ G(f) \circ \varphi_Q,$$
for $h\in Hom_\G(G(Q'),G)$ and $f\in Hom_\Q(Q,Q')$. But this is true by corollary \ref{fun}.
 The naturality of $\theta_{Q,G}$ in $G$ is equivalent to the statement :
$$ h \circ f \circ \varphi_Q=Conj(h)\circ f \circ \varphi_Q,$$
for $f\in Hom_\G(G(Q),G)$ and $h\in Hom_\Q(G,G')$. But this is true since $Conj(h)=h$.
\end{proof}
\begin{theorem}
The triple $\langle G, Conj, \theta \rangle$ is an adjuction from $\Q$ to $\G$.
\end{theorem}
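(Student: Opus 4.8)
The plan is to observe that the statement is now a matter of unwinding the standard definition of an adjunction in terms of a natural isomorphism of hom-bifunctors, since all the substantive work has already been carried out in the preceding results. Recall that an adjunction $\langle F, U, \varphi\rangle$ from a category $\mathcal{C}$ to a category $\mathcal{D}$ consists of a functor $F:\mathcal{C}\to\mathcal{D}$, a functor $U:\mathcal{D}\to\mathcal{C}$, and a family of bijections $\varphi_{X,Y}:Hom_{\mathcal{D}}(FX,Y)\to Hom_{\mathcal{C}}(X,UY)$ natural in both $X$ and $Y$. I would therefore match each ingredient of our triple to the corresponding piece of this definition, taking care to note that the functor called $G$ in the excerpt plays the role of the left adjoint $F$, while $Conj$ plays the role of the right adjoint $U$.

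First I would point out that $G:\Q\to\G$ is a functor by Corollary \ref{fund} and that $Conj:\G\to\Q$ is a functor by the earlier proposition defining the conjugacy quandle functor; these supply the two functors required by the definition. Both are needed merely to make sense of the two hom-bifunctors $Hom_\G(G(-),-)$ and $Hom_\Q(-,Conj(-))$ on the product category $\Q^{op}\times\G$. Next I would simply invoke the preceding lemma: part $1)$ states that each $\theta_{Q,G}$ is a bijection, and part $2)$ states that $\theta_{Q,G}$ is natural in $Q$ and in $G$. Taken together, these assert precisely that $\theta$ is a natural isomorphism between the two hom-bifunctors above, which is exactly the remaining datum demanded by the definition. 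Hence the triple $\langle G, Conj, \theta\rangle$ satisfies every clause of the definition and is an adjunction from $\Q$ to $\G$.

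The argument has no real obstacle remaining: the only genuine content, namely the bijectivity of $\theta$ together with its naturality in both variables, was already established in the lemma, and the functoriality of $G$ and $Conj$ in Corollary \ref{fund} and the earlier proposition. The single point deserving a word of care is that naturality in the two variables separately (as proved in the lemma) is equivalent to naturality as a transformation of bifunctors on $\Q^{op}\times\G$; I would remark that this equivalence is automatic, coming from the fact that a square in a product category commutes if and only if its two coordinate squares do, so that nothing further need be verified.
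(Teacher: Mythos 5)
Your proposal is correct and matches the paper's (implicit) argument: the theorem is stated as an immediate consequence of the preceding lemma, which supplies exactly the bijectivity and two-variable naturality of $\theta_{Q,G}$ that the definition of an adjunction requires. Your added remark that separate naturality in each variable yields naturality over the product category is a reasonable point of care and does not change the substance.
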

We note that given a groupe $G$ the binary operation $x\triangleright y=xy^{-1}x$ defines a quandle structure on $G$. Usually the associated quandle is denoted by $Core(G)$. A group morphism is a quandle morphism with respect to this operation and hence we have a functor $Core : \G \to \Q$. We can associate to a quandle $(Q,\triangleright_Q)$ a group $G(Q)^{core}$ defined as the quotient of the free group $FQ$ by the relations $xy^{-1}x(i_Q(x\triangleright_Q y))^{-1}=1$ for $x,y\in Q$. One can prove for $G(Q)^{core}$ and $Core$ results similar to those obtained for $G(Q)$ and $Conj$.\\

\section{Injective quandles and an intermidiate functor}\label{S3}
One can wonder wether $\varphi_Q :Q\to G(Q)$ is always injective. The answer is known (\cite{DJ}) to be no. We will give an example from \cite{DJ} and a general argument.
\begin{proposition}
Let $Q$ be a quandle having two elements $x,y$ such that $x\triangleright y=y$ and $y\triangleright x\neq x$. The morphism $\varphi_Q :Q\to G(Q)$ is not injective, in fact $\varphi_Q(x)=\varphi_Q(y\triangleright x)$.
\end{proposition}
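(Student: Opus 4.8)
The plan is to exploit the single structural identity available for $\varphi_Q$, namely that by point $4)$ of the proposition above $\varphi_Q$ is a quandle morphism into $Conj(G(Q))$. Concretely this means that for all $a,b\in Q$ one has the relation
$$\varphi_Q(a\triangleright b)=\varphi_Q(a)\,\varphi_Q(b)\,\varphi_Q(a)^{-1}$$
inside $G(Q)$, which is just a restatement of the defining relation $aba^{-1}(a\triangleright b)^{-1}=1$. The entire argument is a two-line computation with this identity; the only thing I need to keep track of is that $x$ and $y\triangleright x$ are genuinely distinct elements of $Q$, which is exactly what the hypothesis $y\triangleright x\neq x$ guarantees.

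First I would feed the hypothesis $x\triangleright y=y$ into this identity. Taking $a=x$ and $b=y$ gives $\varphi_Q(y)=\varphi_Q(x\triangleright y)=\varphi_Q(x)\varphi_Q(y)\varphi_Q(x)^{-1}$, so that $\varphi_Q(x)$ and $\varphi_Q(y)$ commute in $G(Q)$. This is the crux: the condition $x\triangleright y=y$ is precisely what forces the two images to commute.

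Next I would compute $\varphi_Q(y\triangleright x)$ by applying the same identity with $a=y$ and $b=x$, obtaining $\varphi_Q(y\triangleright x)=\varphi_Q(y)\varphi_Q(x)\varphi_Q(y)^{-1}$. Using the commutation relation just established, conjugation by $\varphi_Q(y)$ acts trivially on $\varphi_Q(x)$, so the right-hand side collapses and $\varphi_Q(y\triangleright x)=\varphi_Q(x)$. Since $y\triangleright x\neq x$ as elements of $Q$, this exhibits two distinct points of $Q$ with the same image under $\varphi_Q$, proving that $\varphi_Q$ is not injective. I do not expect any real obstacle here; the whole content lies in noticing that $x\triangleright y=y$ makes the two generators commute, after which the conjugation defining $\varphi_Q(y\triangleright x)$ is forced to be the identity on $\varphi_Q(x)$.
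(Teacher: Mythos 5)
Your proof is correct and is essentially the paper's argument: both use the defining relation of $G(Q)$ twice, first with the pair $(x,y)$ to deduce from $x\triangleright y=y$ that $\varphi_Q(x)$ and $\varphi_Q(y)$ commute, and then with $(y,x)$ to conclude $\varphi_Q(y\triangleright x)=\varphi_Q(y)\varphi_Q(x)\varphi_Q(y)^{-1}=\varphi_Q(x)$, while $y\triangleright x\neq x$ in $Q$. The paper merely packages the same substitution as the single computation $yxy^{-1}=y(x\triangleright y)^{-1}x=x$ in $G(Q)$.
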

\begin{proof}
We have in $G(Q)$ that : 
$$yxy^{-1}=y\triangleright x \quad \text{and} \quad yxy^{-1}=y(xy^{-1}x^{-1})x=y(x\triangleright y)^{-1}x =x,$$
Hence, in $G(Q)$, $y\triangleright x=x$. This proves the proposition.
\end{proof}
The same holdes for $G(Q)^{core}$ if we write $$yx^{-1}y=y\triangleright x\quad \text{and} \quad yx^{-1}y=y(x^{-1}yx^{-1})x=y(x\triangleright y)^{-1}x =x.$$
As one can check the hypothesis of the previous propostition don't hold for a quandle $Conj(G)$ for any group $G$. In such quandles if $x\triangleright y=y$ then $y\triangleright x =x$.\\
We now give an example of a quandle satisfying the hypothesis in the proposition.
Let $Q_3$ be the set on three elements $\{x,y,z\}$ and define $\triangleright :Q_3 \times Q_3\to Q_3$, by 
$$ y  \triangleright  x=z , \ y\triangleright z =x , a\triangleright  b=b, \  \text{for $a\in\{x,z\}$ and $b\in Q_3$}.$$

As one can check $Q_3$ equipped with the above binary operation is a quandle. Moreover, $Q_3$ satisfies the hypothesis of the previous proposition ($x\triangleright y=y$ and $y\triangleright x=z\neq x$). Hence, $\varphi_{Q_3}: Q_3 \to G_{Q_3}$ is not injective and $\varphi_{Q_3}(x)=\varphi_{Q_3}(z)$. It follows that every quandle morphism from $Q_3$ to a group is not injective. In particular, $Q_3$ has no injective linear representation. Moreover, if we denote by $Q_2$ the subquandle $\{x,z\}\subset Q_3$. We have a  subquandle satisfying $Q_3\triangleright Q_2 \subset Q_2$, such that the morphism $G_{Q_2}\to G_{Q_3}$ induced by the inclusion $Q_2\to Q_3$ is not injective. We also have that an injective representation of $Q_2$ don't lift to a representation of $Q_3$.  

\begin{definition}
A quandle $Q$ is called injective if $\varphi_Q :Q \to G(Q)$ is injective. 
\end{definition}
\begin{proposition}
For $Q$ a quandle, the morphism $\varphi_Q:Q\to G(Q)$ is injective if and only if $Q$ is a subquandle of $Conj(G)$ for some group $G$.
\end{proposition}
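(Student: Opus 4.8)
The plan is to prove both implications, each of which reduces cleanly to results already established in the excerpt.

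For the reverse direction, suppose $Q$ is a subquandle of $Conj(G)$, so that the inclusion $j:Q\to Conj(G)$ is an injective quandle morphism. The key observation is that $j$ factors through the universal morphism. By Proposition \ref{univ} applied to $j$, there is a (unique) group morphism $G_j:G(Q)\to G$ satisfying $G_j\circ\varphi_Q=j$. Since $j=G_j\circ\varphi_Q$ is injective, a composite can only be injective if its first factor is, so $\varphi_Q$ must be injective: explicitly, if $\varphi_Q(a)=\varphi_Q(b)$ then $j(a)=G_j(\varphi_Q(a))=G_j(\varphi_Q(b))=j(b)$, whence $a=b$. This settles the "if" part using nothing beyond the universal property.

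For the forward direction, suppose $\varphi_Q:Q\to G(Q)$ is injective. Here I would take $G=G(Q)$ itself. By point $4)$ of the earlier proposition on $\varphi_Q$, the map $\varphi_Q:Q\to Conj(G(Q))$ is a quandle morphism, and by the proposition asserting that the image of a quandle morphism is a subquandle, the set $\varphi_Q(Q)$ is a subquandle of $Conj(G(Q))$. Since $\varphi_Q$ is injective by assumption, it is a quandle isomorphism onto its image $\varphi_Q(Q)$, so $Q$ is (isomorphic to) a subquandle of $Conj(G(Q))$, as required.

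I do not anticipate a genuine obstacle in either direction: both are short consequences of the adjunction machinery. The only point deserving care is the logical direction of the factorization argument in the "if" part, namely that injectivity of $\varphi_Q$ is forced by injectivity of $j=G_j\circ\varphi_Q$ rather than the reverse. It is worth stating once in the proof that we do not claim $G_j$ is injective; we use only the elementary fact that the left factor of an injective composite is injective. This also makes transparent why the natural candidate group in the forward direction is precisely the enveloping group $G(Q)$, matching the universal role it plays throughout the section.
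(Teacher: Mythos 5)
Your proposal is correct and follows essentially the same route as the paper: the forward direction identifies $Q$ with the subquandle $\varphi_Q(Q)$ of $Conj(G(Q))$, and the reverse direction uses the factorization $j=G_j\circ\varphi_Q$ from the universal property to deduce injectivity of $\varphi_Q$ (the paper phrases this as $f(\varphi_Q(x))=x$). The only cosmetic slip is calling $\varphi_Q$ the ``left factor'' of the composite when your own explicit computation correctly treats it as the first map applied; the mathematics is unaffected.
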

\begin{proof}
If $\varphi_Q$ is injective then $Q$ is "isomorphic" to the subquandle $\varphi_Q(Q)$ of $Conj(G(Q))$ and this proves the only if part. If $Q$ is a subquandle of $Conj(G)$ for some $G$. Then the morphism $f:G(Q)\to G$ factoring the inclusion $Q\to Conj(G)$ satisfies $f(\varphi_Q(x))=x$ hence $\varphi_Q$ is injective. We have proved the proposition.
\end{proof}
We recover the obvious facts, that quandles not comming out of groups do not admit injective quandle morphisms into groups endowed with conjugacy quandle structre. In paticular, non injective quandles admet no injective representations.
\begin{definition}
$\Q^{inj}$ is the full subcategory of $\Q$ whose objects are subcandles of $Conj(G)$ for $G$ running over all objects of $\G$ or equivalentely the full subcategory of injective quandles.
\end{definition}
 
\begin{definition}
For $Q$ a quandle, we definie the quandle $Q_{conj}$ as the subquandle $\varphi_Q(Q)$ of $Conj(G(Q))$, and we define the quandle morphism $\theta_Q :Q\to Q_{conj}$ by $\theta_Q(x)=\varphi_Q(x)$.
\end{definition}
\begin{proposition}
For $Q$ a quandle $Q_{conj}$ is an object of $\Q^{inj}$. In particular $\varphi_{Q_{conj}}:Q_{conj}\to G(Q_{conj})$ is injective.
\end{proposition}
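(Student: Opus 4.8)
The plan is to observe that $Q_{conj}$ is, by its very definition, already presented as a subquandle of a conjugacy quandle, and then to invoke the characterization of injective quandles established earlier in this section.

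First I would recall that $Q_{conj}$ was defined as the subquandle $\varphi_Q(Q)$ of $Conj(G(Q))$. Thus, taking $G = G(Q)$, the quandle $Q_{conj}$ is exhibited as a subquandle of $Conj(G)$ for a group $G$. By the definition of $\Q^{inj}$ as the full subcategory of $\Q$ whose objects are the subquandles of $Conj(G)$ for $G$ ranging over all objects of $\G$, this immediately places $Q_{conj}$ in $\Q^{inj}$.

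To obtain the "in particular" clause, I would apply the proposition asserting that, for any quandle $P$, the morphism $\varphi_P : P \to G(P)$ is injective if and only if $P$ is a subquandle of $Conj(G)$ for some group $G$. Applying this proposition to $P = Q_{conj}$, and using the subquandle presentation of $Q_{conj}$ inside $Conj(G(Q))$ just recalled, we conclude that $\varphi_{Q_{conj}} : Q_{conj} \to G(Q_{conj})$ is injective.

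The argument is essentially immediate, so there is no substantial obstacle; the only point demanding a moment's care is that the injectivity criterion is applied to the new quandle $Q_{conj}$, with respect to \emph{its own} enveloping group $G(Q_{conj})$, and not to the original quandle $Q$. The injectivity of $\varphi_Q$ itself is neither assumed nor needed here: what drives the proof is solely that $Q_{conj}$ sits inside a conjugacy quandle by construction. Once this distinction is noted, both assertions follow directly from the cited results.
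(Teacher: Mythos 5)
Your proposal is correct and follows the same route as the paper: the paper's proof likewise observes that $Q_{conj}$ is by construction a subquandle of $Conj(G(Q))$ and then invokes the preceding proposition (injectivity of $\varphi_P$ for a quandle $P$ is equivalent to $P$ being a subquandle of some $Conj(G)$), applied to $Q_{conj}$. Your added remark distinguishing $\varphi_{Q_{conj}}$ from $\varphi_Q$ is a correct and worthwhile clarification, but the argument is the same.
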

\begin{proof}
Follows from the previous proposition since $Q_{conj}$ is a subquandle of $Conj(G(Q))$.
\end{proof}
Now Let $f:Q\to Q'$ be a quandle morphism, by corollary \ref{fun} there is a unique group morphism $G(f) : G(Q) \to G(Q')$  such that   the following diagram commutes : 
$$\begin{tikzcd}
G(Q) \arrow{r}{G(f)} &G(Q')  \\ 
Q\arrow{u}{\varphi_Q} \arrow{r}{f} & Q'\arrow{u}{\varphi_{Q'}} 
\end{tikzcd} $$
In particular $G(f)$ restricts to a morphism $f_{conj} :Q_{conj} \to Q'_{conj}$, such that the following diagram commutes :
\begin{equation}\label{diag}\begin{tikzcd}
Q_{conj} \arrow{r}{f_{conj}} &Q'_{conj}  \\ 
Q\arrow{u}{\theta_Q} \arrow{r}{f} & Q'\arrow{u}{\theta_{Q'}} 
\end{tikzcd} 
\end{equation}

It follows that :
\begin{proposition}
We have a functor $F:\Q \to \Q^{inj}$ given by $F(Q)=Q_{conj}$ and $F(f)=f_{conj}$ for $Q$ a quandle and $f$ a quandle morphism.
\end{proposition}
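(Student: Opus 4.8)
The plan is to verify the three functor axioms, leaning on two facts already at hand: that $G:\Q\to\G$ is a functor (Corollary \ref{fund}), and that $f_{conj}$ is by construction the restriction to $Q_{conj}$ of the group morphism $G(f)$. The preceding proposition already establishes that $F(Q)=Q_{conj}$ is an object of $\Q^{inj}$, so it remains to check that $F$ is well defined on morphisms and that it respects identities and composition.

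First I would confirm that $f_{conj}$ really is a quandle morphism $Q_{conj}\to Q'_{conj}$. By point $2)$ of Corollary \ref{fun} we have $G(f)(\varphi_Q(x))=\varphi_{Q'}(f(x))$, so $G(f)$ carries $Q_{conj}=\varphi_Q(Q)$ into $Q'_{conj}=\varphi_{Q'}(Q')$; since $G(f)$ is a group morphism it is in particular a morphism $Conj(G(Q))\to Conj(G(Q'))$, and hence its restriction $f_{conj}$ is a genuine morphism in $\Q$ landing in $Q'_{conj}$. This inclusion $G(f)(Q_{conj})\subseteq Q'_{conj}$ is the one bookkeeping point that makes everything below go through.

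The key technical observation is that $f_{conj}$ is the \emph{unique} quandle morphism making diagram \eqref{diag} commute, i.e. satisfying $f_{conj}\circ\theta_Q=\theta_{Q'}\circ f$: uniqueness holds because $\theta_Q:Q\to Q_{conj}$ is surjective, its image being $\varphi_Q(Q)=Q_{conj}$ by definition, so any morphism out of $Q_{conj}$ is pinned down by its values on $\theta_Q(Q)$. With this characterization in hand, functoriality is essentially inherited from that of $G$. For the identity, $G(\mathrm{id}_Q)=\mathrm{id}_{G(Q)}$ restricts to $\mathrm{id}_{Q_{conj}}$, giving $F(\mathrm{id}_Q)=\mathrm{id}_{F(Q)}$. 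For composition, given composable $f:Q\to Q'$ and $g:Q'\to Q''$, the functoriality of $G$ gives $G(g\circ f)=G(g)\circ G(f)$; restricting to $Q_{conj}$ and using that $G(f)$ maps $Q_{conj}$ into $Q'_{conj}$ yields $(g\circ f)_{conj}=g_{conj}\circ f_{conj}$, that is $F(g\circ f)=F(g)\circ F(f)$. Equivalently, one checks that both sides satisfy the same commuting relation with $\theta_Q$ and invokes the uniqueness above.

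I do not expect any serious obstacle: the statement is a routine verification once $G$ is known to be functorial. The only thing to watch is the domain--codomain bookkeeping when asserting that the restriction of a composite equals the composite of restrictions, which is precisely guaranteed by the well-definedness step $G(f)(Q_{conj})\subseteq Q'_{conj}$.
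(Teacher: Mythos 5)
Your proposal is correct and follows essentially the same route as the paper, which leaves the verification implicit: the paper's preceding discussion establishes exactly your key point that $G(f)$ restricts to $f_{conj}:Q_{conj}\to Q'_{conj}$ via $G(f)(\varphi_Q(x))=\varphi_{Q'}(f(x))$, and then functoriality of $F$ is inherited from that of $G$ (Corollary \ref{fund}). Your explicit uniqueness argument via the surjectivity of $\theta_Q$ is a reasonable way to make the paper's ``It follows that'' rigorous, but it is not a different method.
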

The following proposition establishes some properties of $Q_{conj}$.
\begin{proposition}\label{Gcon}
Let $Q$ be a quandle. 
\begin{itemize}
\item[1)] The map $\theta_Q : Q \to Q_{conj}$, is a quandle morphism universal for quandle morphisms $Q\to Conj(G)$ where $G$ is a group.
\item[2)] The inclusion $Q_{conj}\to G(Q)$ is universal with respect to quandle morphisms $Q_{conj}\to Conj(G)$ for $G$ a group.
\item[3)] The morphism $G(\theta_Q) : G(Q) \to G(Q_{conj})$ is an isomorphism and $\varphi_{Q_{conj}} :Q_{conj} \to G({Q_{conj}})$ is injective.  
\end{itemize}
\end{proposition}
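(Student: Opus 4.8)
The plan is to deduce all three parts from the single universal property of Proposition~\ref{univ}, which characterizes the pair $(G(Q),\varphi_Q)$, together with the fact that $\varphi_Q(Q)=Q_{conj}$ generates $G(Q)$.

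For part $1)$, I would fix a group $G$ and a quandle morphism $f:Q\to Conj(G)$, and produce the required factorization through $\theta_Q$. By Proposition~\ref{univ} there is a unique group morphism $G_f:G(Q)\to G$ with $G_f\circ\varphi_Q=f$. Since $G_f$ is a group morphism it is simultaneously a quandle morphism $Conj(G(Q))\to Conj(G)$, so restricting it to the subquandle $Q_{conj}\subset Conj(G(Q))$ gives a quandle morphism $g:Q_{conj}\to Conj(G)$. Evaluating on $x\in Q$ gives $g(\theta_Q(x))=G_f(\varphi_Q(x))=f(x)$, so $g\circ\theta_Q=f$. Uniqueness of $g$ is immediate because $\theta_Q(Q)=Q_{conj}$ by definition, so $g$ is pinned down on all of $Q_{conj}$.

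For part $2)$, write $\iota:Q_{conj}\to Conj(G(Q))$ for the inclusion. Given a group $G$ and a quandle morphism $h:Q_{conj}\to Conj(G)$, I would consider $h\circ\theta_Q:Q\to Conj(G)$ and apply Proposition~\ref{univ} to get a unique group morphism $\psi:G(Q)\to G$ with $\psi\circ\varphi_Q=h\circ\theta_Q$. Because $\iota(\theta_Q(x))=\varphi_Q(x)$, evaluating gives $\psi(\varphi_Q(x))=h(\theta_Q(x))$, and since $\theta_Q$ is surjective onto $Q_{conj}$ this yields $\psi\circ\iota=h$. Uniqueness holds because $Q_{conj}=\varphi_Q(Q)$ generates $G(Q)$, so $\psi$ is determined by its restriction to $Q_{conj}$. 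Thus $(G(Q),\iota)$ satisfies exactly the universal property defining the enveloping group of $Q_{conj}$.

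For part $3)$, part $2)$ shows $(G(Q),\iota)$ and $(G(Q_{conj}),\varphi_{Q_{conj}})$ represent the same universal property, so there is a unique isomorphism $\alpha:G(Q)\to G(Q_{conj})$ with $\alpha\circ\iota=\varphi_{Q_{conj}}$. I would then identify $\alpha$ with $G(\theta_Q)$ by comparing them on the generating set $\varphi_Q(Q)$: by Corollary~\ref{fun} we have $G(\theta_Q)(\varphi_Q(x))=\varphi_{Q_{conj}}(\theta_Q(x))$, while $\alpha(\varphi_Q(x))=\alpha(\iota(\theta_Q(x)))=\varphi_{Q_{conj}}(\theta_Q(x))$; since both agree on a generating set they coincide, so $G(\theta_Q)$ is an isomorphism. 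The injectivity of $\varphi_{Q_{conj}}$ was already recorded in the preceding proposition, as $Q_{conj}\in\Q^{inj}$. The only delicate point is the bookkeeping in part $3)$: one must carefully distinguish the element $\theta_Q(x)\in Q_{conj}$ from its image $\iota(\theta_Q(x))=\varphi_Q(x)$ in $G(Q)$ when matching the abstract representing isomorphism with the concrete functorial morphism $G(\theta_Q)$.
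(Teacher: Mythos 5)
Your proposal is correct and follows essentially the same route as the paper: part 1) by restricting the induced map $G_f$ to $Q_{conj}$, part 2) by transporting the universal property of $\varphi_Q$ along the surjection $\theta_Q$, and part 3) by observing that $G(\theta_Q)$ is the comparison map between two objects representing the same universal property. Your extra care in distinguishing $\theta_Q(x)$ from $\iota(\theta_Q(x))=\varphi_Q(x)$ when matching $\alpha$ with $G(\theta_Q)$ is a welcome refinement of a step the paper passes over quickly, but it is not a different argument.
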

\begin{proof}
We prove $1)$. Let $Q\to Conj(G)$ be a quandle morphism there exist a unique group morphism $G_f : G(Q) \to G$ such that $G_f\circ \varphi_Q= f$. The morphism $G_f$ restricts to a quandle morphism  $g :Q_{conj} \to Conj(G)$ satisfying $g\circ \theta_Q=f$. This proves the existence, the uniqueness is due to the fact that $\theta_Q$ is surjective. Let us show$2)$. Let $f:Q_{conj} \to Conj(G)$ be a quandle morphism. A morphism extending $f$ with respect to the inclusion $Q_{conj}\to G(Q)$ is a morphism extending $f\circ \theta_Q : Q\to Conj(G)$. Hence, such morphism exists and is unique by the universal properpty of $\varphi_Q$. Let us consider $3)$. We have already seen that $Q_{conj}$ is injective. $G(\theta_Q)$ is a morphism making the following diagram commutative :
$$\begin{tikzcd}
G(Q) \arrow{r}{G(\theta_Q)} &G(Q_{conj})  \\ 
Q\arrow{u}{\varphi_Q} \arrow{r}{\theta_Q} & Q_{conj}\arrow[swap]{u}{\varphi_{Q_{conj}}} 
\end{tikzcd} $$
In particular the $G(\theta_Q)$ is the group morphism extending $\varphi_{Q_{conj}}$ with respect to the inclusion $Q_{conj}\to G(Q)$ : 
$$\begin{tikzcd}
G(Q) \arrow{r}{G(\theta_Q)} &G(Q_{conj})  \\ 
Q_{conj}\arrow{u} \arrow[swap]{ru}{\varphi_{Q_{conj}}} &  
\end{tikzcd} $$
Hence, this is an isomorphism since $Q_{conj}\to G(Q)$ and $\varphi_{Q_{conj}}:Q_{conj}\to G(Q_{conj})$ are both universal maps for quandle morphism into groups.
\end{proof}
It follows from the example of the quandle $Q_3$ and the last point in the proposition that :
\begin{corollary}
The functor $G(-)$ is not essentially injective on objects. 
\end{corollary}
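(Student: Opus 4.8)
The plan is to exhibit a single pair of quandles lying in distinct isomorphism classes that nonetheless have isomorphic enveloping groups; the non-injective quandle $Q_3$ together with its image $(Q_3)_{conj}$ will serve as this pair. Recall that a functor fails to be essentially injective on objects precisely when two objects in different isomorphism classes are carried to isomorphic targets, so producing one such pair is enough.

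First I would take $Q=Q_3$ and invoke point $3)$ of Proposition \ref{Gcon}, which asserts that $G(\theta_{Q_3}):G(Q_3)\to G\bigl((Q_3)_{conj}\bigr)$ is an isomorphism of groups. Thus $G(Q_3)$ and $G\bigl((Q_3)_{conj}\bigr)$ represent the same isomorphism class in $\G$, and this step is immediate from the proposition.

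Next I would argue that $Q_3$ and $(Q_3)_{conj}$ are \emph{not} isomorphic as quandles. Since $(Q_3)_{conj}=\varphi_{Q_3}(Q_3)$ and the discussion following the definition of $Q_3$ established that $\varphi_{Q_3}(x)=\varphi_{Q_3}(z)$, the map $\varphi_{Q_3}$ is not injective on the three-element set $Q_3$, whence $\lvert (Q_3)_{conj}\rvert<\lvert Q_3\rvert=3$. Two finite quandles of different cardinalities cannot be isomorphic, so $Q_3\not\cong (Q_3)_{conj}$. If one wants the exact count, passing to the abelianization $G(Q_3)^{ab}$ settles it: each defining relation $\varphi_{Q_3}(a\triangleright b)=\varphi_{Q_3}(a)\varphi_{Q_3}(b)\varphi_{Q_3}(a)^{-1}$ becomes $\overline{\varphi_{Q_3}(a\triangleright b)}=\overline{\varphi_{Q_3}(b)}$, so only the relations $\overline{\varphi_{Q_3}(x)}=\overline{\varphi_{Q_3}(z)}$ survive, giving $G(Q_3)^{ab}\cong\Z^2$ with $\overline{\varphi_{Q_3}(x)}\neq\overline{\varphi_{Q_3}(y)}$; hence $\lvert (Q_3)_{conj}\rvert=2$.

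Combining the two steps yields $Q_3\not\cong (Q_3)_{conj}$ while $G(Q_3)\cong G\bigl((Q_3)_{conj}\bigr)$, which is exactly the statement that $G(-)$ is not essentially injective on objects. I anticipate no serious obstacle here; the only point needing a little care is confirming that $\varphi_{Q_3}$ genuinely reduces the cardinality rather than merely failing injectivity in some degenerate way, and the abelianization computation (or simply the recorded identity $\varphi_{Q_3}(x)=\varphi_{Q_3}(z)$ on a finite set) removes any doubt.
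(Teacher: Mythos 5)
Your proposal is correct and follows exactly the argument the paper intends: the non-injective quandle $Q_3$ and its image $(Q_3)_{conj}$ are non-isomorphic (since $\varphi_{Q_3}(x)=\varphi_{Q_3}(z)$ forces $\lvert (Q_3)_{conj}\rvert<3$), yet point $3)$ of Proposition \ref{Gcon} gives $G(Q_3)\cong G\bigl((Q_3)_{conj}\bigr)$. The extra abelianization computation is a harmless (and correct) confirmation that $\lvert (Q_3)_{conj}\rvert=2$.
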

The corollary is known in the litterature (for instance \cite{DJ})
\begin{remark}
It follows from $1)$ of the previous proposition that for instace linear representations (resp. subrepresentations) of $Q$ correspond to linear representation (resp. subrepresentations) of $Q_{conj}$. 
\end{remark}
\begin{proposition}
For $Q$ a quandle $G(Q)$ is abelian if and only if $Q_{conj}$ is trivial $($i.e. $x\triangleright y= y$ for all $x,y\in Q$$)$.
\end{proposition}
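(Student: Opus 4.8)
The plan is to unwind the definition of $Q_{conj}$ and reduce the whole statement to the elementary fact that a group generated by pairwise commuting elements is abelian. Recall that by definition $Q_{conj}$ is the subquandle $\varphi_Q(Q)$ of $Conj(G(Q))$, so its operation is the restriction of conjugation in $G(Q)$: for $x,y\in Q$ one has $\varphi_Q(x)\triangleright\varphi_Q(y)=\varphi_Q(x)\varphi_Q(y)\varphi_Q(x)^{-1}$. Thus the assertion ``$Q_{conj}$ is trivial'' means precisely that $\varphi_Q(x)\varphi_Q(y)\varphi_Q(x)^{-1}=\varphi_Q(y)$ for all $x,y\in Q$, that is, that any two elements of $\varphi_Q(Q)$ commute. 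The first step is to record this reformulation explicitly, since it is what connects the quandle-theoretic hypothesis to a group-theoretic one.

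For the forward implication, I would argue that if $G(Q)$ is abelian then conjugation in $G(Q)$ is trivial, so $a\triangleright b=aba^{-1}=b$ for all $a,b\in G(Q)$. Restricting this identity to the subquandle $Q_{conj}\subseteq Conj(G(Q))$ immediately shows that $Q_{conj}$ is trivial.

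For the converse I would invoke the fact, already used in the uniqueness parts of Proposition \ref{univ} and Corollary \ref{fund}, that $\varphi_Q(Q)$ generates $G(Q)$. If $Q_{conj}$ is trivial, then by the reformulation above the generating set $\varphi_Q(Q)$ consists of pairwise commuting elements; since a group generated by a set of commuting generators is abelian, $G(Q)$ is abelian.

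There is essentially no obstacle here: the only content is the translation of ``$Q_{conj}$ is trivial'' into ``the generators $\varphi_Q(Q)$ commute'', after which both directions are immediate. The one point to be careful about is not to conflate triviality of the quandle $Q$ itself with triviality of $Q_{conj}$; the proposition concerns only the latter, and the argument uses nothing about the operation of $Q$ beyond the defining relations of $G(Q)$ that are already encoded in $\varphi_Q$.
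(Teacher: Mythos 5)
Your proof is correct, but it takes a more direct route than the paper's for the converse direction. The paper first invokes the isomorphism $G(Q)\simeq G(Q_{conj})$ (point 3 of Proposition \ref{Gcon}) and then reads off from the presentation of $G(Q_{conj})$ that triviality of $Q_{conj}$ forces the relations $xyx^{-1}y^{-1}=1$, so that $G(Q_{conj})$ is (free) abelian; your argument bypasses $G(Q_{conj})$ entirely and works inside $G(Q)$ itself, observing that triviality of $Q_{conj}$ says exactly that the generating set $\varphi_Q(Q)$ consists of pairwise commuting elements, whence $G(Q)$ is abelian. The forward direction is the same in both (conjugation in an abelian group is trivial, so every conjugacy subquandle is trivial). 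What your version buys is economy --- it needs only that $\varphi_Q(Q)$ generates $G(Q)$ and that the operation on $Q_{conj}$ is conjugation; what the paper's detour buys is the stronger conclusion recorded immediately afterward in Corollary \ref{ab1}, namely that an abelian $G(Q)$ is in fact the free abelian group $\Z Q_{conj}$, which does not follow from the commuting-generators argument alone.
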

\begin{proof}
From the last proposition (point $3)$), $G(Q)\simeq G(Q_{conj})$. One can check from the definition of the group $G(Q_{conj})$ that the group is abelian if $Q_{conj}$ is trivial. If the group $G(Q_{conj})$ is abelian then every subquandle (for conjugacy) of the group is trivial, in particular the subquandle $Q_{conj}$ ($Q_{conj}$ is injective) is trivial.
\end{proof}
\begin{corollary}\label{ab1}
If $G(Q)$ is abelian then $G(Q)$ is isomorphic to $\Z Q_{conj}$ the free abelian group on the set $Q_{conj}$.
\end{corollary}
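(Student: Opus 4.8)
The plan is to chain together the two results immediately preceding this corollary with the computation of $G(-)$ on trivial quandles recorded in Section 2. First, assuming $G(Q)$ is abelian, the previous proposition gives that $Q_{conj}$ is trivial, i.e. $u\triangleright v=v$ for all $u,v\in Q_{conj}$. Second, part $3)$ of Proposition \ref{Gcon} provides an isomorphism $G(\theta_Q):G(Q)\xrightarrow{\sim}G(Q_{conj})$, so it suffices to identify $G(Q_{conj})$.

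Next I would invoke the fact noted in Section 2 that the enveloping group of a trivial quandle is the free abelian group on its underlying set: if $T$ is trivial, then the defining relations $xyx^{-1}(x\triangleright y)^{-1}=xyx^{-1}y^{-1}=1$ of $G(T)$ force all the generators to commute, so that $G(T)\cong \Z T$. Applying this with $T=Q_{conj}$ yields $G(Q_{conj})\cong \Z Q_{conj}$, and combining with the isomorphism above gives $G(Q)\cong G(Q_{conj})\cong \Z Q_{conj}$, which is the assertion.

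The argument is essentially formal, so there is no genuine obstacle; the only point deserving a moment's care is that the generating set of the free abelian group should be indexed precisely by $Q_{conj}$ rather than by $Q$. This is exactly what the isomorphism of part $3)$ of Proposition \ref{Gcon} guarantees, since $G(\theta_Q)$ carries $\varphi_Q(Q)=Q_{conj}$ onto the canonical generators $\varphi_{Q_{conj}}(Q_{conj})$ of $G(Q_{conj})$, which are precisely the free abelian generators identified in the previous step.
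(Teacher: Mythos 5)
Your proof is correct and follows exactly the route the paper intends: the paper leaves this corollary without an explicit proof because it is the immediate combination of the preceding proposition (abelian $G(Q)$ forces $Q_{conj}$ trivial), part $3)$ of Proposition \ref{Gcon} ($G(Q)\simeq G(Q_{conj})$), and the Section 2 observation that the enveloping group of a trivial quandle is the free abelian group on its underlying set. Your filling-in of the commutator-relation argument and the care about indexing by $Q_{conj}$ rather than $Q$ are both sound.
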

In \cite{ab} the authors describe the finite quandles with abelian enveloping group.
\begin{proposition}
The collection of isomorphisms $G(\theta_Q):G(Q) \to G(Q_{conj})$ for $Q\in Q$ defines a natural trasformation from the functor $G(-)$ to the functor $G\circ F(-)$. 
\end{proposition}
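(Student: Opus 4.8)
The plan is to verify the defining naturality condition for the family $\{G(\theta_Q)\}_{Q\in\Q}$: for every quandle morphism $f:Q\to Q'$ the square
$$\begin{tikzcd}
G(Q) \arrow{r}{G(\theta_Q)} \arrow[swap]{d}{G(f)} & G(Q_{conj}) \arrow{d}{G(f_{conj})} \\
G(Q') \arrow{r}{G(\theta_{Q'})} & G(Q'_{conj})
\end{tikzcd}$$
must commute. Since each component $G(\theta_Q)$ is already known to be an isomorphism by point $3)$ of Proposition \ref{Gcon}, this will in fact exhibit a natural isomorphism rather than merely a natural transformation.

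First I would record the commutativity of the square \eqref{diag} at the level of quandles, namely $f_{conj}\circ\theta_Q=\theta_{Q'}\circ f$ as quandle morphisms $Q\to Q'_{conj}$. This equality is precisely the content of diagram \eqref{diag}, which was established when the restriction $f_{conj}$ was defined from $G(f)$. No new work is needed here; it is simply the statement to be promoted to the group level.

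Next I would apply the functor $G$ to this equality. By functoriality of $G$ (Corollary \ref{fund}), $G$ preserves composition, so $G(f_{conj}\circ\theta_Q)=G(f_{conj})\circ G(\theta_Q)$ and $G(\theta_{Q'}\circ f)=G(\theta_{Q'})\circ G(f)$. Combining these with the displayed quandle-level equality yields $G(f_{conj})\circ G(\theta_Q)=G(\theta_{Q'})\circ G(f)$, which is exactly the commutativity of the naturality square above. Because $F(f)=f_{conj}$ by definition of the functor $F$, we have $(G\circ F)(f)=G(f_{conj})$, so the right-hand vertical map is the correct one and the square is indeed the naturality square for a transformation $G(-)\Rightarrow G\circ F(-)$.

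The argument is essentially formal, so I do not expect a genuine obstacle: the only substantive input is the prior commutativity of \eqref{diag} at the quandle level, after which functoriality of $G$ does all the remaining work. The single point to keep straight is the bookkeeping identity $F(f)=f_{conj}$, ensuring that $(G\circ F)(f)$ is precisely $G(f_{conj})$; once this is in place the conclusion is immediate, and the already-proven invertibility of each component $G(\theta_Q)$ upgrades the resulting natural transformation to a natural isomorphism.
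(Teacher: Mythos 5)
Your argument is correct, and it is a cleaner packaging of the same formal content as the paper's proof. Both proofs rest on the two inputs you identify: the commutativity of diagram \eqref{diag} at the quandle level, and the fact that $G(\theta_Q)$ is an isomorphism (Proposition \ref{Gcon}, point $3)$). The difference is in how the naturality square is then verified. You simply apply the functor $G$ (Corollary \ref{fund}) to the commutative square \eqref{diag}, reading $G(\theta_Q)$ as the image of the morphism $\theta_Q$ under $G$; functoriality immediately turns $f_{conj}\circ\theta_Q=\theta_{Q'}\circ f$ into $G(f_{conj})\circ G(\theta_Q)=G(\theta_{Q'})\circ G(f)$, which is the required square since $F(f)=f_{conj}$. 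The paper instead assembles a larger commutative diagram -- combining \eqref{diag} with the characterization of $G(\theta_Q)$ as the extension of $\varphi_{Q_{conj}}$ along the inclusion $Q_{conj}\to G(Q)$ and with the defining square of Corollary \ref{fun} for $F(f)$ -- and then concludes by the \emph{uniqueness} of $G(f)$ in Corollary \ref{fun}. Since the functoriality of $G$ was itself deduced from that uniqueness, the two arguments are logically equivalent, but yours avoids the auxiliary diagrams entirely and makes it transparent that the result is a purely formal consequence of $G$ being a functor; the paper's version has the minor virtue of re-exhibiting explicitly how $G(\theta_Q)$ interacts with the universal property. Your closing observation that invertibility of the components upgrades the transformation to a natural isomorphism is also correct and matches the spirit of the paper's statement.
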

\begin{proof}
Let $f:Q\to Q'$ be a quandle morphism. As we have seen in the proof of proposition \ref{Gcon} the following diagram is commutative :
 $$\begin{tikzcd}
G(Q) \arrow{r}{G(\theta_Q)} &G(Q_{conj})  \\ 
Q_{conj}\arrow{u} \arrow[swap]{ru}{\varphi_{Q_{conj}}} &  
\end{tikzcd} $$
where $Q_{conj}\to G(Q)$ is the inclusion. Combining this with diagram (\ref{diag}) and the diagram of corollary \ref{fun} for $Q_{conj}$ and $Q'_{conj}$, we get the followin commutative diagram :
$$\begin{tikzcd}
G(Q) \arrow{r}{G(\theta_Q)} & G(Q_{conj}) \arrow{r}{G\circ F (f)} &G(Q'_{conj}) & \arrow[swap]{l}{G(\theta_{Q'})} G(Q')  \\ 
&\arrow{ul}Q_{conj}\arrow[swap]{u}{\varphi_{Q_{conj}}} \arrow{r}{F(f)}  &  Q'_{conj}\arrow{u}{\varphi_{Q'_{conj}}}\arrow[swap]{ur}& \\
&\arrow{uul}{\varphi_{Q}}Q\arrow{u}{\theta_Q}\arrow{r}{f}    &  Q'\arrow{u}{\theta_{Q'}}\arrow[swap]{uur}{\varphi_{Q'}}&
\end{tikzcd} $$
Therefore, $G(\theta_Q)( G\circ F (f) )G(\theta_{Q'})^{-1}=G(f)$ by uniqueness of $G(f)$ from corollary \ref{fun}. We have proved the proposition.
\end{proof}

\section{Enveloping group of finite quandles and representations}
For $Q$ a quandle it follows from the definition of a quandle we took that for $x\in Q$  the map $L_x: Q\to Q$ given by $L_x(y)=x\triangleright y$ is a bijective quandle morphism. The group $Inn(Q)$ is the subgroup of the group of bijections of $Q$ generated by the maps $L_x$ for $x\in Q$. 

\begin{lemma}[\cite{dar}]\label{lem1}
Let $Q$ be an injective quandle. $G(Q)$ is an extension of $Inn(Q)$ by the center $\mathcal{Z}(G(Q))$ of $G(Q)$.
\end{lemma}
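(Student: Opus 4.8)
The plan is to exhibit a surjection $\pi : G(Q) \to Inn(Q)$ whose kernel is exactly the center, i.e.\ a short exact sequence $1 \to \mathcal{Z}(G(Q)) \to G(Q) \xrightarrow{\pi} Inn(Q) \to 1$. First I would build $\pi$ from the universal property of Proposition \ref{univ}. Recall that each $L_x : Q \to Q$, $L_x(y) = x \triangleright y$, is a bijection of $Q$, and that quandle axiom (b) rewrites exactly as $L_{x \triangleright y} = L_x L_y L_x^{-1}$ inside the group $\mathrm{Sym}(Q)$ of bijections of $Q$. Hence $x \mapsto L_x$ is a quandle morphism $Q \to Conj(\mathrm{Sym}(Q))$, and Proposition \ref{univ} produces a unique group morphism $\pi : G(Q) \to \mathrm{Sym}(Q)$ with $\pi(\varphi_Q(x)) = L_x$. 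Since $\varphi_Q(Q)$ generates $G(Q)$, the image of $\pi$ is the subgroup generated by the $L_x$, which is $Inn(Q)$ by definition, so $\pi : G(Q) \to Inn(Q)$ is surjective.

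The second step records the interaction of $\pi$ with conjugation in $G(Q)$. Writing $c_g(h) = g h g^{-1}$, the defining relation of $G(Q)$ gives, on generators, $c_{\varphi_Q(y)}(\varphi_Q(x)) = \varphi_Q(y)\varphi_Q(x)\varphi_Q(y)^{-1} = \varphi_Q(y \triangleright x) = \varphi_Q(L_y(x))$. Since both $g \mapsto c_g$ and $\pi$ are homomorphisms out of $G(Q)$ that agree with this formula on the generating set $\varphi_Q(Q)$, I obtain the intertwining identity $\varphi_Q \circ \pi(g) = c_g \circ \varphi_Q$ for every $g \in G(Q)$.

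Finally I would compute $\ker \pi$. If $\pi(g) = \mathrm{id}$, the identity above yields $c_g(\varphi_Q(x)) = \varphi_Q(x)$ for all $x$, so $g$ commutes with every generator and hence $g \in \mathcal{Z}(G(Q))$; this inclusion $\ker \pi \subseteq \mathcal{Z}(G(Q))$ holds for any quandle. Conversely, if $g \in \mathcal{Z}(G(Q))$ then $c_g = \mathrm{id}$, so $\varphi_Q \circ \pi(g) = \varphi_Q$, i.e.\ $\varphi_Q(\pi(g)(x)) = \varphi_Q(x)$ for all $x$. This is the only place the hypothesis is used: because $Q$ is injective, $\varphi_Q$ is injective, forcing $\pi(g)(x) = x$ for all $x$ and hence $g \in \ker \pi$. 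Thus $\ker \pi = \mathcal{Z}(G(Q))$, which together with the surjectivity of $\pi$ gives the desired extension. The main (indeed the only) obstacle is precisely this last inclusion $\mathcal{Z}(G(Q)) \subseteq \ker \pi$: without injectivity a central element could act as a nontrivial permutation of $Q$ that becomes invisible after applying $\varphi_Q$, so the assumption that $Q$ be injective is exactly what pins down the kernel as the center.
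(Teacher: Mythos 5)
Your proof is correct and follows essentially the same route as the paper: both construct the surjection $G(Q)\to Inn(Q)$ sending $\varphi_Q(x)$ to $L_x$ (the paper realizes it directly as restriction of conjugation to $\varphi_Q(Q)\subset G(Q)$, you via the universal property applied to $x\mapsto L_x$), and both identify the kernel with the center using that $\varphi_Q(Q)$ generates $G(Q)$ and that $\varphi_Q$ is injective. Your write-up is merely more explicit about the intertwining identity and about exactly where injectivity enters.
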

\begin{proof}
Let $x$ be an element of $Q$ seen as an element of $G(Q)$. The conjugacy by $x$ in $G(Q)$ stabilises $Q \subset G_Q$. Indeed, $xyx^{-1}=L_x(y)$ for every $y\in Q$. It follows, since $Q$ generates $G(Q)$ that we have a surjective morphism from $h: G(Q)\to Inn(Q)$ associating to $a\in G(Q)$ the restriction of the conjugacy by $a$ to $Q\subset G(Q)$. Since $Q$ generate $G(Q)$ the kernel of $h$ is exactly the center of $G(Q)$.  
\end{proof}
\begin{corollary}\label{cen}
For any quandle $Q$, $G(Q)\simeq G(Q_{conj})$ is an extension of $Inn(Q_{Conj})$ by the center $\mathcal{Z}(G(Q))$ of $G(Q)$.
\end{corollary}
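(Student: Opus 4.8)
The plan is to deduce this directly from Lemma~\ref{lem1} together with point $3)$ of Proposition~\ref{Gcon}, since all the substantive work has already been carried out. The key observation is that $Q_{conj}$ is an \emph{injective} quandle, a fact established earlier in Section~\ref{S3}, so Lemma~\ref{lem1} applies verbatim with $Q_{conj}$ in place of $Q$.

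First I would invoke Lemma~\ref{lem1} for the injective quandle $Q_{conj}$. This yields a short exact sequence
$$ 1 \longrightarrow \mathcal{Z}(G(Q_{conj})) \longrightarrow G(Q_{conj}) \overset{h}{\longrightarrow} Inn(Q_{conj}) \longrightarrow 1, $$
exhibiting $G(Q_{conj})$ as an extension of $Inn(Q_{conj})$ by its own center.

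Next I would transport this description along the isomorphism $G(\theta_Q): G(Q) \to G(Q_{conj})$ supplied by point $3)$ of Proposition~\ref{Gcon}. Since any group isomorphism carries the center onto the center, $G(\theta_Q)$ restricts to an isomorphism $\mathcal{Z}(G(Q)) \to \mathcal{Z}(G(Q_{conj}))$; composing $h$ with $G(\theta_Q)$ then presents $G(Q)$ itself as an extension of $Inn(Q_{conj})$ by $\mathcal{Z}(G(Q))$, which is exactly the asserted statement.

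I do not expect any genuine obstacle: the corollary is a formal combination of the extension result for injective quandles with the isomorphism $G(Q)\simeq G(Q_{conj})$. The only step warranting an explicit word is the remark that the center is preserved under a group isomorphism, which is immediate, so the main content of the proof is simply identifying $Q_{conj}$ as the injective quandle to which Lemma~\ref{lem1} is applied.
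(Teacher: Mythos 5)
Your proposal is correct and matches the paper's intent: the corollary is stated without proof precisely because it follows immediately from Lemma~\ref{lem1} applied to the injective quandle $Q_{conj}$ together with the isomorphism $G(\theta_Q):G(Q)\to G(Q_{conj})$ of Proposition~\ref{Gcon}. Your explicit remark that an isomorphism carries center to center is the only detail the paper leaves tacit, and it is handled correctly.
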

\begin{lemma}\label{lem2}
Every finitely generated abelian group admits a faithfull representation into $GL_n(\Z)$ for some $n$.
\end{lemma}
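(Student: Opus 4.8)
The plan is to reduce to the cyclic factors via the structure theorem for finitely generated abelian groups, which gives an isomorphism $A \cong \Z^r \oplus \Z/d_1\Z \oplus \cdots \oplus \Z/d_k\Z$, and then to build a faithful integral representation one factor at a time, assembling them as a block-diagonal (direct sum) representation. The key formal observation is that if $\iota_j : C_j \hookrightarrow GL_{n_j}(\Z)$ are injective group morphisms on the individual summands $C_j$, then the map $\rho(c_1,\dots,c_s)=\mathrm{diag}\bigl(\iota_1(c_1),\dots,\iota_s(c_s)\bigr)$ is a group morphism $A \to GL_n(\Z)$ with $n=n_1+\cdots+n_s$, and its kernel is trivial precisely because $\rho(c_1,\dots,c_s)=I$ forces each $\iota_j(c_j)=I$, hence each $c_j=0$. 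So it suffices to embed each factor faithfully and integrally.

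For the free factors I would use the unipotent embedding $\Z \hookrightarrow GL_2(\Z)$ given by $n \mapsto \bigl(\begin{smallmatrix}1 & n\\ 0 & 1\end{smallmatrix}\bigr)$, which is a homomorphism (products add the off-diagonal entry), is injective, and lands in $GL_2(\Z)$ since each such matrix has determinant $1$ and integral inverse. For the torsion factors I would use the integral regular (permutation) representation of $\Z/d\Z$: the cyclic shift matrix $P\in GL_d(\Z)$ with $Pe_i=e_{i+1 \bmod d}$ is a permutation matrix, so $P^{-1}=P^{\top}=P^{d-1}$ is again integral, and $P$ has order exactly $d$, whence $\bar m \mapsto P^m$ is an injection $\Z/d\Z \hookrightarrow GL_d(\Z)$. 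Taking the direct sum of these embeddings over all factors then yields the desired faithful $\rho : A \to GL_n(\Z)$.

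The only point that genuinely requires care — and the reason I favor the permutation representation for the torsion part — is \emph{integrality}: we must land in $GL_n(\Z)$, i.e. produce matrices with integer entries whose inverses are also integral (equivalently $\det = \pm 1$), and not merely matrices invertible over $\mathbb{Q}$. The unipotent blocks (determinant $1$) and the permutation blocks (determinant $\pm 1$) both manifestly meet this requirement, whereas the more tempting companion matrix of the cyclotomic polynomial $\Phi_d$ has determinant $\pm\Phi_d(0)$, which is not always a unit in $\Z$. Beyond this, there is no serious obstacle; the faithfulness of the assembled representation follows immediately from the block-diagonal structure as noted above.
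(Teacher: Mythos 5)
Your proof is correct and follows essentially the same route as the paper: decompose the group into cyclic factors, embed $\Z$ via the unipotent matrix $\bigl(\begin{smallmatrix}1&1\\0&1\end{smallmatrix}\bigr)$ and $\Z/d\Z$ via the cyclic permutation matrix, and take the block-diagonal direct sum, whose faithfulness is immediate. Your explicit remarks on integrality and on why the companion matrix of $\Phi_d$ would not work are a welcome addition, but the argument is the same.
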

\begin{proof}
For $m>0$ denote by $\rho_m: \Z/m\Z \to GL_m(\Z)$ the representation given by :
$$ \rho_m(1)=   \begin{bmatrix}
  0& 0 	& \dots &0&1\\
1&\ddots &\ddots &\ddots&0  \\
0&\ddots & \ddots & \ddots& \vdots \\
\vdots& \ddots & \ddots & \ddots &\vdots\\
0& &0 &1 &  0\end{bmatrix}
$$
It is a faithfull representation of $\Z/m\Z$. For $m=0$, denote by $\rho_0 : \Z \to GL_2(\Z)$ the representation given by :
$$\rho_0(1)= \begin{bmatrix}
  1& 1\\
0&1 \\
\end{bmatrix} $$
$\rho_0$ is a faithfull representation. A finitely generated abelian group is isomorphic to :
 $$G=\Z/{m_1}\Z\times \Z/m_2\Z \times \cdots \times \Z/m_k \Z.$$
for some $m_i\geq 0$. Let $\theta_i$ be the representation of $G$ given by $$\theta_i(a_1,\dots , a_k)=\rho_{m_i}(a_i),$$ for $(a_1,\dots,a_k)\in G$. One checks that the representation : $$\overset{k}{\underset{i=1}{\bigoplus}} \theta_i,$$
is a faithfull representation of $G$ into some $GL_n(\Z)$ for some $n$. We have proved the lemma.
\end{proof}
\begin{theorem}\label{rep}
For $Q$ be a finite quandle, there exist a faithfull group representation $\rho: G(Q)\to GL_n(\Z)$ for some $n$. 
\end{theorem}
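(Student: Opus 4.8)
The plan is to read off the structure of $G(Q)$ from Corollary~\ref{cen} and then assemble the desired representation from an induced representation and a pulled-back regular representation. Since $Q$ is finite, $Q_{conj}=\varphi_Q(Q)$ is finite, so $Inn(Q_{conj})$ is a subgroup of the symmetric group on the finite set $Q_{conj}$ and is therefore a finite group. Corollary~\ref{cen} gives a short exact sequence
$$1 \longrightarrow \mathcal{Z}(G(Q)) \longrightarrow G(Q) \longrightarrow Inn(Q_{conj}) \longrightarrow 1,$$
so the center $Z:=\mathcal{Z}(G(Q))$ has finite index $N:=|Inn(Q_{conj})|$ in $G(Q)$. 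Since $Q$ is finite, $G(Q)$ is finitely presented, hence finitely generated, and a subgroup of finite index in a finitely generated group is again finitely generated; thus $Z$ is a finitely generated abelian group. By Lemma~\ref{lem2} there is a faithful representation $\sigma : Z \to GL_m(\Z)$ for some $m$.

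Next I would induce $\sigma$ up to $G(Q)$. As $[G(Q):Z]=N$, the induced module $V=\mathrm{Ind}_{Z}^{G(Q)}(\sigma)$ is free of rank $Nm$ over $\Z$, yielding an integral representation $\rho_V : G(Q)\to GL_{Nm}(\Z)$. Because $Z$ is central, conjugation by any element of $G(Q)$ fixes $Z$ pointwise, so the restriction of $\rho_V$ to $Z$ is isomorphic to $N$ copies of $\sigma$. In particular $\rho_V|_Z$ is faithful, i.e. $\ker(\rho_V)\cap Z=\{1\}$. It follows that the quotient map $G(Q)\to G(Q)/Z\cong Inn(Q_{conj})$ is injective on $\ker(\rho_V)$, so $\ker(\rho_V)$ is finite — but not yet trivial.

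To remove this residual finite kernel I would add a representation that is faithful on the finite quotient. Let $\pi : G(Q)\to GL_N(\Z)$ be the pullback along $G(Q)\to Inn(Q_{conj})$ of the integral left regular representation of the finite group $Inn(Q_{conj})$; then $\ker(\pi)=Z$. Setting $\rho=\pi\oplus\rho_V$, which maps $G(Q)$ into $GL_{N+Nm}(\Z)$, one gets
$$\ker(\rho)=\ker(\pi)\cap\ker(\rho_V)=Z\cap\ker(\rho_V)=\{1\},$$
so $\rho$ is the sought faithful integral representation, with $n=N+Nm$.

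I expect the crux to be the bookkeeping of kernels in the final assembly: one must verify that inducing from the central subgroup $Z$ keeps $Z$ faithfully represented (this is what forces $\ker(\rho_V)$ to lie in a transversal of $Z$ and hence to be finite), and then that summing with the pulled-back regular representation of $Inn(Q_{conj})$ annihilates precisely the part of the kernel sitting outside $Z$. The other point requiring care is the finite generation of $Z$, which I would deduce from $G(Q)$ being finitely generated together with $Z$ having finite index, so that Lemma~\ref{lem2} applies.
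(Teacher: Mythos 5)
Your proof is correct and follows essentially the same route as the paper: pass to the central extension $1\to\mathcal{Z}(G(Q))\to G(Q)\to Inn(Q_{conj})\to 1$ from Corollary~\ref{cen}, observe that the center is a finitely generated abelian subgroup of finite index, apply Lemma~\ref{lem2}, and induce the resulting faithful integral representation up to $G(Q)$. The only divergence is at the very end: the paper notes that the induced representation is already faithful on all of $G(Q)$ (an element of its kernel must stabilize the summand $e\otimes W$, hence lie in $\mathcal{Z}(G(Q))$ and then in $\ker\sigma=\{1\}$), whereas you only check faithfulness on the center and then kill the residual finite kernel by adding the pulled-back regular representation of $Inn(Q_{conj})$ --- a correct but unnecessary extra summand.
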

\begin{proof}
Since we have proved in the previous section that $G(Q)\simeq G(Q_{conj})$ we can assume that $Q$ is injective. Since $Q$ is finite $G(Q)$ is finitely generated and $Inn(Q)$ is finite. Hence by the lemma \ref{lem1} the center $\mathcal{Z}(G(Q))$ of $G(Q)$ is a finite index subgroup of a finitely generated group and therefore $\mathcal{Z}(G(Q))$ is a finitely generated subgroup. Therefore by lemma \ref{lem2} $\mathcal{Z}(G(Q))$ admits a faithfull representation $\rho' : \mathcal{Z}(G(Q)) \to GL_m(\Z)$ for some $m$. Since  $\mathcal{Z}(G(Q))$ has finite index and $\rho'$ is faithfull, the induced representation $Ind_ {\mathcal{Z}(G(Q))}^{G_Q} \rho'$ is with values in $GL_n(\Z)$ for some $n$ and is faithfull. We have proved the theorem.
\end{proof}
We have already seen that non injective quandles admit no injective linear representations.
\begin{corollary}
A finite injective quandle $Q$ admits an injective representation $\rho : Q\to GL_n(\Z)$ for some $n$. 
\end{corollary}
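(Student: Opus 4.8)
The plan is to deduce the statement directly from Theorem \ref{rep} together with the universal morphism $\varphi_Q$, so that no new work beyond the linearity theorem is needed. Since $Q$ is a finite quandle, Theorem \ref{rep} supplies a faithful group representation $\rho : G(Q)\to GL_n(\Z)$ for some $n$. The idea is simply to pull this representation back along $\varphi_Q$ to obtain a representation of $Q$ itself.

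First I would recall that $\varphi_Q : Q\to Conj(G(Q))$ is a quandle morphism (this is point $4)$ of the proposition listing the basic properties of $G(Q)$ in Section 2), and that every group morphism is automatically a quandle morphism for the conjugacy structure, so that $\rho$ may be read as a quandle morphism $Conj(G(Q))\to Conj(GL_n(\Z))$. Consequently the composite $\rho\circ\varphi_Q : Q\to Conj(GL_n(\Z))$ is a quandle morphism, that is, a linear representation of $Q$ in the sense of the definition given in Section 1.

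It then remains only to check injectivity. Because $Q$ is injective, $\varphi_Q$ is injective by the very definition of an injective quandle; because $\rho$ is faithful, it is injective as a map of underlying sets. A composite of two injective maps is injective, so $\rho\circ\varphi_Q$ is an injective quandle morphism, which is precisely an injective representation $Q\to Conj(GL_n(\Z))$, as claimed.

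I do not expect any genuine obstacle: all the difficulty has already been absorbed into Theorem \ref{rep}, and this corollary is a formal consequence of it. The only point deserving a word of care is the identification of an \emph{injective representation} $\rho : Q\to GL_n(\Z)$ with an injective quandle morphism into $Conj(GL_n(\Z))$, which is exactly the convention adopted in the definition of a linear representation of a quandle in Section 1.
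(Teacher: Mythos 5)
Your argument is correct and is exactly the intended deduction: the paper states this corollary without proof as an immediate consequence of Theorem \ref{rep}, and the composite $\rho\circ\varphi_Q$ of the faithful representation of $G(Q)$ with the injective quandle morphism $\varphi_Q$ is precisely the injective representation required. Your explicit check that $\rho\circ\varphi_Q$ is a quandle morphism into $Conj(GL_n(\Z))$ merely spells out what the paper leaves implicit.
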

\begin{proposition}\label{fin}
A finite injective quandle $Q$ is a subquandle of some $Conj(G)$ for $G$ finite.
\end{proposition}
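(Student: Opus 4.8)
The plan is to realize $Q$ as the image of a suitable \emph{finite} quotient of its enveloping group. Since $Q$ is injective, the morphism $\varphi_Q : Q \to Conj(G(Q))$ identifies $Q$ with the subquandle $Q_{conj}=\varphi_Q(Q)$, so it suffices to produce a finite group $G$ together with a quandle morphism $f : Q \to Conj(G)$ that is injective on the underlying set. The image $f(Q)$ is then a subquandle of $Conj(G)$ by the proposition asserting that the image of a quandle morphism is a subquandle, and $f$ restricts to an isomorphism $Q\xrightarrow{\sim} f(Q)$, giving exactly the desired conclusion.

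The key input I would use is Theorem \ref{rep}: because $Q$ is finite, $G(Q)$ embeds into $GL_n(\Z)$ for some $n$, so $G(Q)$ is a finitely generated linear group and is therefore residually finite by Malcev's theorem. Consider the finite subset
$$ S=\{\, \varphi_Q(x)\varphi_Q(y)^{-1} : x,y\in Q,\ x\neq y \,\}\subset G(Q). $$
Since $Q$ is injective, every element of $S$ is nontrivial. Applying residual finiteness to each of the finitely many elements of $S$ and intersecting the resulting subgroups, I obtain a finite-index normal subgroup $N\trianglelefteq G(Q)$ with $N\cap S=\emptyset$.

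I would then set $G=G(Q)/N$, a finite group, with quotient morphism $\pi:G(Q)\to G$, and define $f=Conj(\pi)\circ\varphi_Q : Q\to Conj(G)$. This $f$ is a quandle morphism, being a composite of the quandle morphism $\varphi_Q$ with the quandle morphism induced by the group morphism $\pi$ (recall that group morphisms are quandle morphisms for the conjugacy structure). By the choice of $N$, for $x\neq y$ we have $\pi(\varphi_Q(x))\neq\pi(\varphi_Q(y))$, so $f$ is injective; hence $Q$ is isomorphic to the subquandle $f(Q)$ of $Conj(G)$ with $G$ finite.

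The only step that is not bookkeeping is the residual finiteness of $G(Q)$, which I expect to be the main (and in fact the sole) obstacle. However, this presents no genuine new difficulty: it follows at once from the linearity of $G(Q)$ established in Theorem \ref{rep} together with Malcev's theorem on finitely generated linear groups. Once residual finiteness is in hand, the separation of the finitely many elements of $S$ and the descent of $\varphi_Q$ to a finite quotient are routine.
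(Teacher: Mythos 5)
Your proposal is correct and follows essentially the same route as the paper: both deduce residual finiteness of $G(Q)$ from its linearity (Theorem \ref{rep} plus Malcev) and then separate the finitely many nontrivial elements $\varphi_Q(x)\varphi_Q(y)^{-1}$ in a finite quotient. The only cosmetic difference is that you intersect finite-index normal subgroups to form a single quotient, while the paper maps into the product of the finite quotients $G(x,y)$; these are equivalent.
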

\begin{proof}
It follows from the previous theorem that $G(Q)$ is residually finite (\cite{Mal}). Hence, for every couple $(x,y)\in Q^2$ there is a finite quotient $G(x,y)$ of $G(Q)$ in wich the class of $xy^{-1}$ is not equal to $1$. In particular, the classes of $x$ and $y$ are distinct in $G(x,y)$. The composed group morphism $G(Q) \to \prod_{(x,y)\in Q^2} G(x,y)$ restricted to $Q$ is an injective quandle morphism and $ \prod_{(x,y)\in Q^2} G(x,y)$  is a finite group.
\end{proof}

\begin{lemma}
Let $G$ be a countable group and $V$ an irreducible represenation of $G$ over $\C$. The center of $G$ acts on $V$ by multiplication by scalars.
\end{lemma}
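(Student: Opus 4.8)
The plan is to establish the countable-dimensional version of Schur's lemma over $\C$ and then apply it to central elements. The first observation is that $V$ has at most countable dimension over $\C$: fixing a nonzero vector $v\in V$, irreducibility forces $V$ to equal the $\C$-span of the orbit $\{g\cdot v : g\in G\}$, and since $G$ is countable this span is generated by countably many vectors.

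Next I would record the formal part of Schur's lemma. For any $G$-equivariant endomorphism $S$ of $V$, both $\ker S$ and $\mathrm{im}\, S$ are subrepresentations, so by irreducibility a nonzero $S$ is a bijection; thus $\mathrm{End}_G(V)$ is a division algebra over $\C$. Moreover, for the fixed nonzero $v$ the evaluation map $S\mapsto S(v)$ is $\C$-linear and injective, since its kernel would otherwise be a proper nonzero subrepresentation. This yields the crucial cardinality bound $\dim_\C \mathrm{End}_G(V)\le \dim_\C V\le \aleph_0$.

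The heart of the argument is to upgrade the division algebra $\mathrm{End}_G(V)$ to $\C$ itself using this countability. Suppose some $T\in \mathrm{End}_G(V)$ is not a scalar. Because $\mathrm{End}_G(V)$ is a division algebra, $T-\lambda\,\mathrm{id}$ is invertible for every $\lambda\in\C$ (it is nonzero precisely because $T$ is not scalar), so $T$ has no eigenvalue and is therefore transcendental over $\C$. Then the family $\{(T-\lambda\,\mathrm{id})^{-1} : \lambda\in\C\}$ is $\C$-linearly independent in $\mathrm{End}_G(V)$ — this is the usual partial-fractions independence of $\{1/(t-\lambda)\}$ inside the rational function field $\C(T)\cong\C(t)$ — and it has uncountable cardinality, contradicting the bound from the previous step. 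Hence $\mathrm{End}_G(V)=\C$.

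Finally, I would apply this to the center of $G$. For $z$ central, the operator $\rho(z)$ commutes with $\rho(g)$ for all $g\in G$, so $\rho(z)\in \mathrm{End}_G(V)=\C$; that is, $z$ acts by multiplication by a scalar. The main obstacle is the transcendence-and-cardinality step, which is the only place where both the algebraic closedness of $\C$ (guaranteeing every algebraic element has an eigenvalue) and the countability of $G$ (forbidding uncountably many linearly independent operators) are genuinely needed. Everything else reduces to the standard kernel/image reasoning behind Schur's lemma.
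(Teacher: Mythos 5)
Your proof is correct and follows essentially the same route as the paper: bound $\dim_\C V$ using a cyclic vector and the countability of $G$, use the injective evaluation map $T\mapsto T(v)$ to bound the commutant, note it is a division algebra by Schur, and conclude because the only division algebra of countable dimension over $\C$ is $\C$ itself. The only difference is that you supply the standard proof of this last fact (a non-scalar $T$ is transcendental, and the uncountable family $\{(T-\lambda\,\mathrm{id})^{-1}\}$ is linearly independent), which the paper simply asserts.
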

\begin{proof}
$V$ is irreducible. Hence, any vector is cyclic and since $G$ is countable $V$ has countable dimension over $\C$. Therefore, the lemma follows from Dixmier's lemma.%The commutant $D$ of the representation has hence a countable dimension over $\C$. Indeed, the linear morphism $D \to V, T\mapsto T(v)$ for $v\in V$, $v\neq 0$ is injective by Schur's lemma. By Schurs lemma $D$ is a also a division algebra over $\C$ (the scalar matrices). Since the only division algebra of countable dimension over $\C$ is $\C$ itself, $D$ is reduced to the scalar matrices and the lemma follows. 
\end{proof}
\begin{proposition}
Let $G$ be a countable group with center $\mathcal{Z}(G)$ of finite index. An irreducible representation of $G$ over $\C$ is finite dimensional and the degree of such represenation is bounded by $\sqrt{\vert G/\mathcal{Z}(G) \vert}$.
\end{proposition}
\begin{proof}
Let $\theta : G \to GL(V)$ be an irreducible representation of a group as in the proposition. Let $s : G/\mathcal{Z}(G) \to  G$ be a section and $v$ be a cyclic vector of $V$. By the previous lemma the center acts by multiplication by scalars and hence the span of the vectors $gv$ for $g\in G$ is equal to the span of the vectors $s(h)v$ for $h\in G/\mathcal{Z}(G)$. Since the center has finite index the representation is finite dimensional. Let $\rho : \C[G] \to End(V)$ be the algebra morphism corresponding to the irreducible representation $\theta$ of $G$. Since $V$ is finite dimensional by the density theorem $\rho$ is surjective. On the other hand, since the center acts by scalars, the image of $\rho$ is linearly generated by $\rho(s(h))$ for $h\in G/\mathcal{Z}(G)$. Hence, $dim(V)^2 \leq \vert G/ \mathcal{Z}(G)\vert $. We have proved the proposition. 
%Let $\theta : G \to GL(V)$ be an irreducible representation. Denote by $\mathcal{Z}$ the center of $G$. Let $s : G/\mathcal{Z} \to  G$ be a section and define $\gamma : G/\mathcal{Z} \times G/ \mathcal{Z} \to \mathcal{Z}, (g,h)\mapsto s(g)s(h)s(gh)^{-1}$. By the previous lemma the center acts by scalar hence we have a 2-cocyle $\alpha : G/\mathcal{Z} \times G/ \mathcal{Z} \to \C^*$ mapping $(g,h)$ to the scalar correponding to the action of $\gamma(g,h)$ on $V$. Now let $\rho : \C[G/\mathcal{Z}]\to GL(V)$ be the linear map defined by $ \rho(g)=\theta(s(g))$ for $g\in G /\mathcal{Z}$. One has that :
%$$\alpha(g,h)\rho(gh)=\rho(g)\rho(h),$$
%for $g,h \in G /\mathcal{Z}$.
%This proves that $\rho$ is a represenation of the twisted algebra ($\C[G /\mathcal{Z}], \star)$ where $\star$ is the associataive product (since $\alpha$ is a $2$-cocyle) given by :
%$$ g\star h = \alpha(g,h)gh,$$ 
%for $g,h \in G /\mathcal{Z}$. The representation $\rho$ is irreducible since $\theta$ is irreducible. Since $\mathcal{Z}$ has finite index $\C[ G /\mathcal{Z}]$ is finite dimensional and hence $V$ is finite dimensional. Indeed, irreducible representations of a finite dimensional algebra are quotients of the algebra by a maximal left ideal (one can use a cylic vector for instance to construct a morphism form the algebra to the vector space of the representation).
\end{proof}
\begin{proposition}
For $Q$ a finite quandle, an irreducible representations of $Q$ or equivalently of $G(Q)$ is finite dimensional; the degree of such representation is bounded by $\sqrt{\vert Inn(Q) \vert }$. 
\end{proposition}
\begin{proof}
follows from the last theorem, the fact that $G(Q)$ for finite quandles is finitely generated, corollary \ref{cen} and the fact (that we will see later) that we have a surjective morphism from $Inn(Q)\to Inn(Q_{conj})$.
\end{proof}

\begin{proposition}
Let $Q$ be a finite quandle and $V$ a finite dimensional irreducible representation of $G(Q)$ (equivalently of $Q$) over $\C$. The degree of $V$ divides the order of $Inn(Q_{conj})$ and hence divides the order of $Inn(Q)$.
\end{proposition}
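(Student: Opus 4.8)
The plan is to reduce everything to the finite group $H:=Inn(Q_{conj})$ and to the theory of projective (twisted) representations. First I would invoke Corollary~\ref{cen}: since $G(Q)\simeq G(Q_{conj})$ is a central extension
$$1 \longrightarrow \mathcal{Z}(G(Q)) \longrightarrow G(Q) \longrightarrow Inn(Q_{conj}) \longrightarrow 1,$$
the quotient $G(Q)/\mathcal{Z}(G(Q))$ is the finite group $H=Inn(Q_{conj})$. Writing $\mathcal{Z}=\mathcal{Z}(G(Q))$, I would reuse the construction from the proof of the preceding theorem: because $G(Q)$ is finitely generated (hence countable) and $V$ is a finite-dimensional irreducible representation, $\mathcal{Z}$ acts by scalars, a set-theoretic section $s:H\to G(Q)$ produces a $2$-cocycle $\alpha:H\times H\to \C^*$, and $V$ becomes an irreducible module over the twisted group algebra $\C^\alpha[H]$. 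Thus the whole question becomes: the dimension of an irreducible projective representation of the finite group $H$, with cocycle $\alpha$, divides $|H|$.

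This last statement is the crux, and the main obstacle. To prove it I would pass to a finite central extension. Since $H$ is finite its Schur multiplier $H^2(H,\C^*)$ is finite, so the class of $\alpha$ has some finite order $m$; after replacing $\alpha$ by a cohomologous cocycle (which does not change $\C^\alpha[H]$ up to isomorphism, hence does not change $\dim V$) I may assume $\alpha$ takes values in the group $\mu_m$ of $m$-th roots of unity. The cocycle then defines a genuine finite central extension
$$1 \longrightarrow \mu_m \longrightarrow \widetilde{H} \longrightarrow H \longrightarrow 1,\qquad |\widetilde H|=m\,|H|,$$
and the projective representation lifts to an honest irreducible linear representation $\widetilde{\rho}:\widetilde H\to GL(V)$ on which the central subgroup $\mu_m$ acts by the (scalar) tautological character. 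Now I would apply It\^o's theorem on abelian normal subgroups: since $\mu_m$ is central, hence abelian and normal in $\widetilde H$, the degree of any irreducible representation of $\widetilde H$ divides $[\widetilde H:\mu_m]=|H|$. Hence $\dim V$ divides $|H|=|Inn(Q_{conj})|$. The delicate points are the reduction of the $\C^*$-valued cocycle to a finite-order one valued in roots of unity and the invariance of $\dim V$ under this change; I expect these to be where the argument needs the most care.

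Finally, for the clause ``and hence divides the order of $Inn(Q)$'', I would exhibit a surjection $Inn(Q)\to Inn(Q_{conj})$. The map $\theta_Q:Q\to Q_{conj}$ is a surjective quandle morphism, so $\theta_Q\circ L_x=L_{\theta_Q(x)}\circ\theta_Q$ for every $x\in Q$, where $L_x(y)=x\triangleright y$. If a word in the generators $L_{x_1}^{\pm1},\dots,L_{x_k}^{\pm1}$ equals the identity in $Inn(Q)$, then the corresponding word in the $L_{\theta_Q(x_i)}^{\pm1}$ fixes every point of $\theta_Q(Q)=Q_{conj}$ and is therefore the identity in $Inn(Q_{conj})$; this shows that $L_x\mapsto L_{\theta_Q(x)}$ extends to a well-defined surjective group homomorphism $Inn(Q)\to Inn(Q_{conj})$. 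Consequently $|Inn(Q_{conj})|$ divides $|Inn(Q)|$ (both being finite, as subgroups of symmetric groups on finite sets), and the divisibility $\dim V \mid |Inn(Q_{conj})|$ obtained above immediately yields $\dim V \mid |Inn(Q)|$.
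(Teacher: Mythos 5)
Your proof is correct, and its overall skeleton matches the paper's: reduce to the finite group $Inn(Q_{conj})$ via the surjection $Inn(Q)\to Inn(Q_{conj})$ (which you prove more carefully than the paper does at this point), use the central extension of Corollary~\ref{cen} and the scalar action of the center to turn $V$ into an irreducible projective representation of $Inn(Q_{conj})$, and then invoke the fact that the degree of an irreducible projective representation of a finite group over $\C$ divides the order of the group. Where you genuinely diverge is in the proof of that last fact. The paper takes the preimage $\tilde{G}\subset SL(V)$ of the image in $PGL(V)$; since $SL(V)\to PGL(V)$ has finite central kernel, $\tilde{G}$ is a finite central extension carrying an honest irreducible linear representation, and the classical theorem that the degree divides the index of the center finishes the argument, because $\tilde{G}$ modulo its center is a quotient of $Inn(Q_{conj})$. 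You instead work on the cocycle side: finiteness of the Schur multiplier $H^2(H,\C^*)$ plus divisibility of $\C^*$ lets you normalize $\alpha$ to take values in $\mu_m$, build the explicit extension $\widetilde{H}$ of order $m|H|$, lift to a linear representation, and apply It\^o's theorem to the abelian normal subgroup $\mu_m$. Both are valid; the paper's $SL(V)$ trick is shorter and avoids the cocycle normalization (the one step you rightly flag as delicate, though it does go through by divisibility of $\C^*$), while your route is the standard textbook treatment of projective representations and uses a slightly stronger input (It\^o) than needed --- the ``degree divides the index of the center'' theorem already suffices for $\widetilde{H}$, since its center contains $\mu_m$.
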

\begin{proof}
We will see that we have a surjective morphism from $Inn(Q)$ into $Inn(Q_{conj})$. Hence, the order of $Inn(Q_{conj})$ divides the order of $Inn(Q)$. We only need to prove that the degree of $V$ divides the order of $Inn(Q_{conj})$. Since $V$ is a finite dimensional irreducible representation the center of $G(Q)$ acts by scalars in $V$ and hence our representation induces an irreducible projective representation $\rho : Inn(Q_{conj}) \to PGL(V)$ (see corollary \ref{cen}). The proposition follows from the fact that over $\C$ the degree of a finite dimensional projective irreducible representation divides the order of the group. Indeed, If $\theta : G \to PGL(W)$ is an irreducible finite dimensional representation then the group $\tilde{G}$ in $SL(W)$ (special linear group) preimage of $\theta(G) \subset PGL(V)$ is a finite central extension of $\theta(G)$. This is true since the natural morphism $SL(W)\to PGL(W)$ is surjective with finite kernel lying in the center of $SL(W)$. Now the natural inclusion $\tilde{G}\to GL(W)$ is an irreducible finite dimensional representation of a finite group over 
$\C$. Hence, the dimension of $W$ divides the index of the center of $\tilde{G}$. But the quotient of $\tilde{G}$ by its center is a quotient of $\theta(G)$ and hence a quotient of $G$. The claim follows.
\end{proof}
\begin{remark}
The map $Q\to Inn(Q),x\mapsto L_x$ is a quandle morphism and the image generates $Inn(Q)$ hence any irreducible represenation of $Inn(Q)$ gives an irreducible represenation of $Q$. Moreover, an irreducible representation over $\C$ of $Q$ finite induces an irreducible projective represenation of $Inn(Q_{conj})$ and hence of $Inn(Q)$.
\end{remark}
\begin{remark}
A finite quandle have infinitely many irreducible representation. For instance, as we will see the abelianisation of $G(Q)$ is a free abelian group and hence $G(Q)$ and $Q$ have infinitely many non equivalent $1$-dimensional representation. 
\end{remark}
\begin{remark}
The group $G(Q)$ is of type I, since it is countable and contains an abelian subgroup of finite index, its center.
\end{remark}
\section{Quandles, uniquely divisible groups and nilpotent groups}

\begin{proposition}
Let $Q$ be a finite quandle and $f:Q\to Conj(G)$ a quandle morphism where $G$ is a uniquely divisible group. 
\begin{itemize}
\item[1)] The image of $f$ is a trivial quandle.
\item[2)] The morphism $f$ factors throught $Q\overset{\varphi_Q}{\to} G(Q)\to G(Q)^{ab}$, where $G(Q)\to G(Q)^{ab}$ is the abelianisation map.
\end{itemize}
\end{proposition}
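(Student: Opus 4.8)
The plan is to prove that the subgroup $H = \langle f(Q)\rangle \le G$ generated by the image is abelian; part 1) is then immediate, since a subquandle of $Conj(G)$ whose elements pairwise commute is trivial, and part 2) will follow formally from the universal property of $\varphi_Q$.

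To obtain abelianity I would first exploit finiteness of $Q$. Each map $L_x : Q \to Q$, $L_x(y) = x \triangleright y$, is a permutation of the finite set $Q$, hence has finite order $m = m(x)$. Writing $a = f(x)$ and using that $f : Q \to Conj(G)$ is a quandle morphism, one checks $f(L_x^{k}(y)) = a^{k} f(y) a^{-k}$ for all $k \ge 0$; taking $k = m$ and $L_x^{m} = \mathrm{id}$ yields $a^{m} f(y) a^{-m} = f(y)$ for every $y \in Q$. Thus $a^{m} = f(x)^{m}$ commutes with every generator of $H$ and so lies in the center of $H$.

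The key step, and the one I expect to be the crux, is then a root-uniqueness argument. Fix $x$ and an arbitrary $y \in Q$, and put $a' = f(y)f(x)f(y)^{-1} = f(y \triangleright x) \in f(Q)$. Using that $a^{m}$ is central in $H$ and $f(y) \in H$,
$$(a')^{m} = f(y)\,a^{m}\,f(y)^{-1} = a^{m}.$$
So $a$ and $a'$ are both $m$-th roots of the single element $a^{m}$; since $G$ is uniquely divisible, $m$-th roots are unique, forcing $a' = a$, i.e. $f(y)$ commutes with $f(x)$. Letting $x, y$ range over $Q$ shows $H$ is abelian, whence $f(u)\triangleright f(v) = f(u)f(v)f(u)^{-1} = f(v)$ and the image quandle is trivial. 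This proves 1).

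For 2) I would invoke Proposition \ref{univ} to get the unique morphism $G_f : G(Q) \to G$ with $G_f \circ \varphi_Q = f$. Its image is generated by $f(Q) = G_f(\varphi_Q(Q))$, hence equals the abelian group $H$, so $G_f$ factors as $\bar{G}_f \circ \pi$ through the abelianization $\pi : G(Q) \to G(Q)^{ab}$. Then $f = G_f \circ \varphi_Q = \bar{G}_f \circ \pi \circ \varphi_Q$, which is the required factorization. The only genuinely nontrivial ingredient is the interplay in the crux step between finiteness of $Q$, which manufactures the central power $a^{m}$, and unique divisibility of $G$, which converts the identity $(a')^{m} = a^{m}$ into $a' = a$; everything else is bookkeeping.
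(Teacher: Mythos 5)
Your proof is correct and takes essentially the same route as the paper: finiteness of $Q$ produces the power $f(x)^m$ commuting with every $f(y)$, and unique extraction of $m$-th roots then forces $f(y)f(x)f(y)^{-1}=f(x)$, so the image generates an abelian subgroup and $2)$ follows from the universal property of $\varphi_Q$. If anything, your write-up makes explicit the root-uniqueness step (both $f(x)$ and its conjugate being $m$-th roots of the central element $f(x)^m$) that the paper's very terse proof leaves implicit.
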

\begin{proof}
For $x$ and $y$ in $Q$ , denote by $x^n\triangleright y$ the $n$ times iteration $(x\triangleright(x\triangleright (\cdots (x\triangleright(x \triangleright y))\cdots)))$. Since $Q$ is finite $x^m\triangleright y=y$ for some $m$ ($L_x$ is of finite order). Hence : 
$$f(x^m\triangleright y)=f(x)^mf(y)f(x)^{-m}=f(y) \quad \text{and}\quad f(x)^mf(y)^mf(x)^{-m}=f(y)^m.$$
Since $G$ is uniquely divisible $f(x)f(y)f(x)^{-1}=f(y)$. This proves the proposition.
\end{proof}
The group $G(Q)^{ab}$ can be determined by the orbits of the quandle (see next section). 
\begin{corollary}
The only finite subquandles (for conjugacy) of a uniquely divisible group are trivial. 
\end{corollary}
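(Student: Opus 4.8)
The plan is to deduce this immediately from the preceding proposition by exhibiting a finite subquandle as the image of a quandle morphism whose source is a finite quandle. First I would unwind the definitions: suppose $Q$ is a finite subset of a uniquely divisible group $G$ that is closed under the conjugacy operation $x\triangleright y=xyx^{-1}$, so that $(Q,\triangleright)$ is a subquandle of $Conj(G)$. Then the inclusion $\iota:Q\to Conj(G)$ is a quandle morphism, and its source $Q$ is a finite quandle.

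Next I would apply the previous proposition to $f=\iota$. Because $Q$ is finite and $G$ is uniquely divisible, part $1)$ of that proposition asserts that the image $\iota(Q)$ is a trivial quandle. Since $\iota$ is an inclusion, its image is $Q$ itself, carrying exactly the conjugacy operation it inherits from $G$. Therefore $x\triangleright y=xyx^{-1}=y$ for all $x,y\in Q$, which is precisely the statement that $Q$ is a trivial quandle.

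I do not expect any genuine obstacle: all the substance already resides in the proposition, where finiteness forces each left translation $L_x$ to have finite order, so that $x^{m}\triangleright y=y$ for some $m$, and unique divisibility then collapses $f(x)f(y)f(x)^{-1}$ to $f(y)$. The only thing to verify is the elementary bookkeeping observation that the image of an inclusion coincides with the subquandle itself, so that "the image is trivial" and "the subquandle is trivial" are literally the same assertion.
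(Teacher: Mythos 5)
Your proposal is correct and is exactly the intended derivation: the paper states this as an immediate corollary of the preceding proposition, obtained by applying part $1)$ to the inclusion $\iota:Q\to Conj(G)$ of a finite subquandle and noting that the image of an inclusion is the subquandle itself. No gaps.
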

\begin{corollary}
The only finite subquandle of the group $U_n(\K)$ of unipotent matrices of dimension $n\times n$ over a characteristic $0$ field are trivial.
\end{corollary}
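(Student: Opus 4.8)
The plan is to deduce this immediately from the previous corollary by showing that $U_n(\K)$ is a uniquely divisible group. Recall that $U_n(\K)$ consists of the matrices of the form $I+N$, where $N$ ranges over the set $\mathcal{N}$ of strictly upper triangular $n\times n$ matrices over $\K$; every such $N$ satisfies $N^n=0$, so $N$ is nilpotent.

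First I would set up the exponential and logarithm maps. Since $\K$ has characteristic $0$ and every $N\in\mathcal{N}$ is nilpotent, the series
$$\exp(N)=\sum_{k\geq 0}\frac{N^k}{k!}\qquad\text{and}\qquad \log(I+N)=\sum_{k\geq 1}\frac{(-1)^{k+1}}{k}N^k$$
are finite sums with coefficients in $\K$, and they define mutually inverse bijections $\exp:\mathcal{N}\to U_n(\K)$ and $\log:U_n(\K)\to\mathcal{N}$. In particular $\exp$ is injective. I would also record the one-parameter subgroup identity $\exp(sX)\exp(tX)=\exp((s+t)X)$ for $X\in\mathcal{N}$ and $s,t\in\K$, which holds because $sX$ and $tX$ commute.

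Next I would verify unique divisibility. Fix $g\in U_n(\K)$ and an integer $m\geq 1$, and write $g=\exp(X)$ with $X=\log(g)\in\mathcal{N}$. For existence of an $m$-th root, set $h=\exp(X/m)$, where characteristic $0$ is used to make sense of $X/m$; then the one-parameter identity gives $h^m=\exp(m\cdot X/m)=\exp(X)=g$. For uniqueness, suppose $h^m=g$ and write $h=\exp(Y)$ with $Y\in\mathcal{N}$; then $\exp(mY)=h^m=g=\exp(X)$, and injectivity of $\exp$ forces $mY=X$, so $Y=X/m$ and $h=\exp(X/m)$ is the unique $m$-th root. Hence $U_n(\K)$ is uniquely divisible.

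Finally, the previous corollary applies verbatim: a finite subquandle (for conjugacy) of a uniquely divisible group is trivial, and $U_n(\K)$ is uniquely divisible, so every finite subquandle of $Conj(U_n(\K))$ is trivial. The only point requiring any care is the identification of $U_n(\K)$ as a uniquely divisible group; once the $\exp/\log$ bijection and its injectivity are in hand, the rest is immediate, so I do not expect a genuine obstacle beyond correctly invoking characteristic $0$ (both to guarantee convergence of the finite series and to form the element $X/m$).
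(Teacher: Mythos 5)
Your proof is correct and follows exactly the route the paper intends: the corollary is stated without proof precisely because it reduces to the preceding corollary once one knows that $U_n(\K)$ is uniquely divisible in characteristic $0$, which you establish via the standard $\exp$/$\log$ bijection between strictly upper triangular matrices and unitriangular matrices. Your write-up simply supplies the details the paper omits.
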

\begin{corollary}
For $\K$ a field of characteristic $0$ and $Q$ a finite quandle, $G(Q)$ can be embeded into $U_n(\K)$ for some $n$ if and only if $Q_{conj}$ is trivial (equivalently $G(Q)$ is abelian, $G(Q)\simeq \Z Q_{conj})$. 
\end{corollary}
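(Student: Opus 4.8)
The plan is to prove the two implications separately, after recording that the three conditions in the statement are already known to be equivalent: the proposition characterizing abelianness of $G(Q)$ gives ``$Q_{conj}$ trivial $\iff$ $G(Q)$ abelian'', and Corollary \ref{ab1} upgrades abelianness to the isomorphism $G(Q)\simeq \Z Q_{conj}$. So it suffices to prove that $G(Q)$ embeds into some $U_n(\K)$ if and only if $Q_{conj}$ is trivial.

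For the forward implication I expect an essentially immediate argument. Suppose $\iota : G(Q)\hookrightarrow U_n(\K)$ is a group embedding. Since every group morphism is a morphism of conjugacy quandles, $\iota$ carries the finite subquandle $Q_{conj}\subset Conj(G(Q))$ isomorphically onto a finite subquandle of $Conj(U_n(\K))$. The preceding corollary asserts that every finite subquandle of $U_n(\K)$, for $\K$ of characteristic $0$, is trivial; hence the image, and therefore $Q_{conj}$ itself, is trivial. This is where the characteristic-$0$ hypothesis does its work on this side, through the unique divisibility of $U_n(\K)$ underlying that corollary.

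For the reverse implication, assume $Q_{conj}$ is trivial. Then $G(Q)\simeq \Z Q_{conj}$, and since $Q$ is finite this is the free abelian group $\Z^m$ with $m=|Q_{conj}|$. It remains to embed $\Z^m$ into a unipotent matrix group. I would use the subgroup of $U_{m+1}(\K)$ consisting of the matrices $M(a)=I+\sum_{i=1}^m a_i E_{1,i+1}$ for $a=(a_1,\dots,a_m)$, where $E_{1,i+1}$ is the elementary matrix with a single $1$ in position $(1,i+1)$. Because the products $E_{1,i+1}E_{1,j+1}$ all vanish, one checks that $M(a)M(b)=M(a+b)$, so $a\mapsto M(a)$ is an isomorphism from $(\K^m,+)$ onto an abelian subgroup of $U_{m+1}(\K)$. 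Restricting to integer vectors gives a homomorphism $\Z^m\to U_{m+1}(\K)$, which is injective precisely because $\K$ has characteristic $0$, so that $\Z\to\K$, $n\mapsto n\cdot 1_{\K}$, is injective.

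I do not anticipate a serious obstacle: the forward direction is a direct appeal to the previous corollary, and the reverse direction reduces, via Corollary \ref{ab1}, to exhibiting an explicit unipotent embedding of a finitely generated free abelian group. The only point requiring any care, and the only place the field hypothesis is genuinely used on the reverse side, is the injectivity of $\Z\hookrightarrow(\K,+)$, which would fail in positive characteristic.
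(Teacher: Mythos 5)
Your proof is correct and follows essentially the same route as the paper: both directions reduce to the preceding corollary on finite subquandles of $U_n(\K)$ and to Corollary \ref{ab1}. The only difference is that you spell out the explicit unipotent embedding of $\Z^m$ (which is correct), whereas the paper simply asserts that a finitely generated free abelian group embeds into some $U_n(\K)$.
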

\begin{proof}
The equivalence after equivalently was established in section \ref{S3} corollary \ref{ab1}. If $G(Q)$ can be embeded $Q_{conj}$ can be embeded hence $Q_{conj}$ is trivial. In the other direction if $Q_{conj}$ is trivial $G(Q)\simeq G(Q_{conj}) \simeq \Z Q_{conj}$ and such group can be imbeded in $U_n(\K)$ for some $n$.
\end{proof}
For $Q$ a quandle the orbit of $x\in Q$ is the orbit of $x$ with respect to $Inn(Q)$.
\begin{definition}
A quandle $Q$ is decomposable if it is the disjoint union of two (nonempty) subquandles otherwise $Q$ is indecomposable.
\end{definition}
The points of the following proposition are known. They can be found either in the proof of proposition 1.17, either correspond to lemma 1.15  of \cite{deco} :
\begin{proposition}\label{ind}
Let $Q$ be a quandle and $x$ an element of $Q$.
\begin{itemize}
\item[1)] $Q$ is indecomposible if and only if the orbit of $x$ is equal to $Q$.
\item[2)] An two indecomposable subquandles of $Q$ containing $x$ are contained in a third indecomposable subquandle.
\item[3)] There is a maximal (for inclusion) indecomposable subquandle $Q_x$ of $Q$ containing $x$.
\item[4)] The subquandles $Q_y$ for $y\in Q$ form a partition of $Q$.
\end{itemize}
\end{proposition}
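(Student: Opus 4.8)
The plan is to reduce all four statements to the behaviour of $Inn(Q)$-orbits, resting on two elementary observations that I would establish first. The first is that every $Inn(Q)$-orbit $O$ is a subquandle: if $a,b\in O$ then $a\triangleright b=L_a(b)$, and the unique $z$ with $a\triangleright z=b$ is $z=L_a^{-1}(b)$; since $L_a,L_a^{-1}\in Inn(Q)$ both values lie in the orbit of $b$, namely $O$. The second, and more delicate, observation is that for any subquandle $A$ and any $a\in A$ one has $L_a(A)=A$: closure gives $L_a(A)\subseteq A$, while axiom (c) applied \emph{inside} $A$ says the solution of $a\triangleright z=b$ stays in $A$ for $b\in A$, i.e.\ $L_a^{-1}(A)\subseteq A$, and the two inclusions combine to equality. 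Consequently $L_a$ also permutes the complement $Q\setminus A$.

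With these, part 1) is immediate. Since the orbits partition $Q$ and each is a subquandle (so is any union of orbits), if $Q$ had more than one orbit then isolating one orbit $O$ would give a decomposition $Q=O\sqcup(Q\setminus O)$ into two nonempty subquandles; hence indecomposability forces a single orbit, i.e.\ the orbit of $x$ equals $Q$. Conversely, a decomposition $Q=A\sqcup B$ makes every generator $L_a$ preserve both pieces, because $a\in A$ forces $L_a(A)=A$ and $L_a(B)=B$, and symmetrically for $a\in B$; thus all of $Inn(Q)$ stabilises $A$ and $B$, so no orbit meets both and the orbit of $x$ is not all of $Q$.

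For part 2), given indecomposable subquandles $A,B$ both containing $x$, I would let $D$ be the subquandle generated by $A\cup B$ and show $D$ is a single $Inn(D)$-orbit. By part 1) any $a\in A$ is joined to $x$ by a word in maps $L_c$ with $c\in A$, interpreted inside $A$; the identical word, read in $Inn(D)$, places $a$ in the $Inn(D)$-orbit $O$ of $x$, and likewise for $B$. Thus $O$ is a subquandle of $D$ containing $A\cup B$, so minimality of $D$ gives $O=D$, and $D$ is indecomposable by part 1). Part 3) then follows from Zorn's lemma applied to the poset of indecomposable subquandles containing $x$: this poset is nonempty since $\{x\}$ qualifies, the union of a chain of subquandles is a subquandle, and the same word argument shows such a union is a single $Inn$-orbit, hence indecomposable; so every chain is bounded and a maximal element $Q_x$ exists. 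Finally part 4): the $Q_y$ cover $Q$ since $y\in Q_y$, and if $Q_y\cap Q_z\neq\varnothing$ then part 2) yields an indecomposable subquandle $E\supseteq Q_y\cup Q_z$; as $E$ contains $y$ and is indecomposable, maximality of $Q_y$ forces $E=Q_y$, and symmetrically $E=Q_z$, whence $Q_y=Q_z$, so the blocks are pairwise equal or disjoint.

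The only step requiring genuine care rather than bookkeeping is the invariance $L_a(A)=A$ for $a\in A$: closure under $\triangleright$ alone gives merely an inclusion, and it is precisely the uniqueness clause (c) of the subquandle structure that upgrades it to an equality. This invariance is what renders a decomposition $Inn(Q)$-stable and so ties decomposability to the orbit decomposition; granting part 1) and the fact that orbits are subquandles, the remaining parts are formal.
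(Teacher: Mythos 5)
Your proof is correct. There is nothing in the paper to compare it against line by line: the proposition is stated with only a pointer to the proof of Proposition 1.17 and Lemma 1.15 of \cite{deco}, so you have supplied an argument where the paper defers entirely to the literature. Your route --- first that $Inn(Q)$-orbits are subquandles, second that any subquandle $A$ satisfies $L_a(A)=A$ for $a\in A$ (so a decomposition $Q=A\sqcup B$ is automatically $Inn(Q)$-stable), hence indecomposability is equivalent to transitivity of $Inn(Q)$, and then the generated-subquandle and Zorn arguments for parts 2)--4) --- is essentially the standard one from that reference, and it is complete. You are right that the one delicate point is upgrading $L_a(A)\subseteq A$ to equality: with the paper's definition of subquandle (the operation \emph{restricts to a quandle structure}, so axiom (c) holds internally and $L_a^{-1}(b)\in A$ for $a,b\in A$) this is exactly available. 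Two small points you could make explicit, neither a gap: the subquandle generated by $A\cup B$ exists because an intersection of subquandles is again a subquandle (the solution $z=L_a^{-1}(b)$ of $a\triangleright z=b$ lies in every subquandle containing $a$ and $b$, by uniqueness in $Q$); and since part 2) makes the poset of indecomposable subquandles containing $x$ directed, the maximal element produced by Zorn is in fact the greatest one, which is what justifies writing ``the'' subquandle $Q_x$ in parts 3) and 4).
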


Here is a well known fact :
\begin{proposition}\label{abel}
We have a group morphism $G(Q) \to \Z (Q/Inn(Q))$ (the free abelian group on $Q/Inn(Q)$) mapping the image of $x\in Q$ in $G(Q)$ to its class in $Q/ Inn(Q)$. This morphism induces an isomorphism $G(Q)^{ab}\to \Z (Q/Inn(Q))$.
\end{proposition}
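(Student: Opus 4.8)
The plan is to produce the asserted morphism from the universal property of $G(Q)$ (Proposition \ref{univ}) and then to identify the induced map on abelianizations, either by constructing an explicit inverse or by abelianizing the defining presentation of $G(Q)$.

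First I would construct the morphism. Since $\Z(Q/Inn(Q))$ is abelian, the quandle $Conj(\Z(Q/Inn(Q)))$ is trivial. Define $f : Q \to Conj(\Z(Q/Inn(Q)))$ by sending $x$ to its orbit class $[x] \in Q/Inn(Q)$. Because $x \triangleright y = L_x(y)$ lies in the same $Inn(Q)$-orbit as $y$, one has $[x \triangleright y] = [y]$, while in the trivial quandle $[x] \triangleright [y] = [y]$; hence $f(x \triangleright y) = f(x) \triangleright f(y)$ and $f$ is a quandle morphism. By Proposition \ref{univ} it extends uniquely to a group morphism $\psi : G(Q) \to \Z(Q/Inn(Q))$ with $\psi(\varphi_Q(x)) = [x]$, which is the asserted morphism. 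As its target is abelian, $\psi$ factors through the abelianization, giving $\bar\psi : G(Q)^{ab} \to \Z(Q/Inn(Q))$.

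Next I would show $\bar\psi$ is an isomorphism by exhibiting an inverse. Abelianizing the relation $\varphi_Q(x)\varphi_Q(y)\varphi_Q(x)^{-1} = \varphi_Q(x \triangleright y)$ shows that in $G(Q)^{ab}$ the image of $\varphi_Q(x \triangleright y)$ equals the image of $\varphi_Q(y)$; iterating, the image of $\varphi_Q$ depends only on the $Inn(Q)$-orbit. Hence $\eta([x]) = \overline{\varphi_Q(x)}$ is a well-defined homomorphism $\eta : \Z(Q/Inn(Q)) \to G(Q)^{ab}$ on basis elements. Since $\varphi_Q(Q)$ generates $G(Q)$, its image generates $G(Q)^{ab}$, so $\eta$ is surjective; and $\bar\psi \circ \eta = \mathrm{id}$ on the generators $[x]$, while $\eta \circ \bar\psi = \mathrm{id}$ on the generators $\overline{\varphi_Q(x)}$. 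Thus $\bar\psi$ and $\eta$ are mutually inverse.

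Equivalently, and more conceptually, I would abelianize the presentation directly: $(FQ)^{ab} = \Z Q$, and each relator $xyx^{-1}(x \triangleright y)^{-1}$ maps to $y - (x \triangleright y) = y - L_x(y)$, so that $G(Q)^{ab} \cong \Z Q / N$ with $N$ the subgroup generated by the differences $y - L_x(y)$. The main point to check carefully — and the only genuinely non-formal step — is that $N$ is exactly the subgroup whose quotient is the free abelian group on the orbits: the relations identify $y$ with $L_x(y)$, and the equivalence relation they generate coincides with the $Inn(Q)$-orbit partition, since $Inn(Q)$ is generated as a group by the $L_x$ and the identification $y \sim L_x(y)$ automatically forces $y \sim L_x^{-1}(y)$. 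This yields $\Z Q / N \cong \Z(Q/Inn(Q))$ compatibly with $\bar\psi$, confirming the isomorphism.
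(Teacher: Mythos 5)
Your proof is correct and follows the route the paper itself indicates: the paper's proof consists only of the remark that the statement ``can be managed using generators and relations,'' which is precisely the content of your argument (construct $\psi$ via the universal property of $G(Q)$, then abelianize the presentation and check that the relations $y\sim L_x(y)$ generate exactly the $Inn(Q)$-orbit equivalence). Your writeup supplies the details the paper omits, including the key point that $y\sim L_x(y)$ forces $y\sim L_x^{-1}(y)$, so nothing further is needed.
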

\begin{proof}
This can be maneged using generators and relations.
\end{proof}
\begin{corollary}\label{clas}
If $Q$ is indecomposable then the classes of $x,y\in Q$ in $G(Q)^{ab}$ are equal.
\end{corollary}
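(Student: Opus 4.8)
The plan is to read the statement off directly from the two preceding propositions. First I would invoke Proposition~\ref{abel}, which provides an isomorphism $G(Q)^{ab}\xrightarrow{\sim}\Z(Q/Inn(Q))$ carrying the class of $\varphi_Q(x)$ in $G(Q)^{ab}$ to the class $[x]$ of $x$ in $Q/Inn(Q)$, viewed as one of the free generators of $\Z(Q/Inn(Q))$. Since this map is an isomorphism, two elements $\varphi_Q(x)$ and $\varphi_Q(y)$ have equal image in $G(Q)^{ab}$ precisely when $[x]=[y]$ in $Q/Inn(Q)$, that is, exactly when $x$ and $y$ lie in the same $Inn(Q)$-orbit.

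It then remains to observe that indecomposability forces $Q/Inn(Q)$ to be a single point. This is exactly Proposition~\ref{ind}(1): if $Q$ is indecomposable, then the orbit of any element equals all of $Q$, so $Q/Inn(Q)$ is a singleton and $[x]=[y]$ for every pair $x,y\in Q$. Combining the two steps yields that the classes of $x$ and $y$ in $G(Q)^{ab}$ coincide.

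Alternatively, and essentially equivalently, one could argue straight from the presentation without explicitly passing through $Q/Inn(Q)$. Abelianizing the defining relation $xyx^{-1}(x\triangleright y)^{-1}=1$ of $G(Q)$ collapses the conjugation and gives, in additive notation, that the class of $y$ equals the class of $x\triangleright y=L_x(y)$ in $G(Q)^{ab}$. Hence the class of an element is constant along the maps $L_x$, and since $Inn(Q)$ is generated by the $L_x$, it is constant on entire $Inn(Q)$-orbits; indecomposability then makes all classes equal. There is essentially no obstacle here: the only point requiring a little care is to match the free generators correctly under the isomorphism of Proposition~\ref{abel}, so that equality of classes in $G(Q)^{ab}$ is genuinely equivalent to equality of $Inn(Q)$-orbits.
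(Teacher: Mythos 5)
Your proof is correct and follows exactly the route the paper intends: the corollary is an immediate consequence of the isomorphism $G(Q)^{ab}\simeq \Z(Q/Inn(Q))$ from Proposition~\ref{abel} together with Proposition~\ref{ind}(1), which makes $Q/Inn(Q)$ a single point when $Q$ is indecomposable. The alternative direct argument from the abelianized presentation is a fine (and equivalent) sanity check but adds nothing beyond the first paragraph.
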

We will denote by $grG(Q)=\oplus_{i\geq1} gr_i G(Q)$ the associated graded of $G(Q)$ with respect to the lower central series $(\Gamma_n G(Q))_n$. We recall that the associated graded of a group  with respect to the lower central series is a graded Lie ring generated by its degree $1$ part ($gr_1$) and that the Lie bracket is induced by the commutator.
\begin{lemma}
For $Q$ indecomposable $grG(Q)\simeq gr_1 G(Q)\simeq \Z$ $($$gr_i G(Q)=0$ for $i>1$$)$.
\end{lemma}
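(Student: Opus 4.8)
The plan is to compute the degree-one part $gr_1 G(Q)$ explicitly and then exploit the fact, recalled just before the statement, that $grG(Q)$ is generated as a graded Lie ring by $gr_1 G(Q)$.

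First I would identify $gr_1 G(Q) = \Gamma_1 G(Q)/\Gamma_2 G(Q) = G(Q)/[G(Q),G(Q)] = G(Q)^{ab}$. By Proposition \ref{abel} we have $G(Q)^{ab} \simeq \Z(Q/Inn(Q))$, the free abelian group on the set of orbits. Since $Q$ is indecomposable, part $1)$ of Proposition \ref{ind} says that the orbit of any element is all of $Q$, so $Q/Inn(Q)$ is a single point and $gr_1 G(Q) \simeq \Z$. (One could also invoke Corollary \ref{clas}: all generating classes $\varphi_Q(x)$ coincide in the abelianization, so $G(Q)^{ab}$ is cyclic, and being infinite it is $\Z$.)

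The second and final step is to propagate this to higher degrees. Because $grG(Q)$ is generated as a Lie ring by $gr_1 G(Q)$, and $gr_1 G(Q) \simeq \Z$ is generated by a single element $a$, the whole ring is generated by $a$. But the Lie subring generated by a single element is abelian, since antisymmetry forces $[a,a]=0$; hence it equals $\Z a$, concentrated in degree one. Spelling this out, each $gr_i G(Q)$ with $i\geq 2$ is spanned by iterated brackets of degree-one elements, and every such bracket is a $\Z$-multiple of one containing $a$ at least twice, hence vanishes. Therefore $gr_i G(Q)=0$ for $i>1$ and $grG(Q)=gr_1 G(Q)\simeq \Z$.

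I do not expect a serious obstacle here: once $gr_1 G(Q)\simeq \Z$ is established, the collapse in higher degrees is a formal consequence of generation in degree one. The only point needing a little care is to record that $gr_1$ is \emph{infinite} cyclic rather than merely cyclic — that is, to combine the single-orbit condition from indecomposability with the freeness in Proposition \ref{abel} — so that the resulting Lie ring is genuinely $\Z$.
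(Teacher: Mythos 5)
Your proposal is correct and follows essentially the same route as the paper: identify $gr_1G(Q)=G(Q)^{ab}\simeq \Z(Q/Inn(Q))$ via Proposition \ref{abel}, collapse $Q/Inn(Q)$ to a point using part $1)$ of Proposition \ref{ind}, and then kill the higher degrees because the graded Lie ring is generated by a single degree-one element whose self-bracket vanishes. No meaningful difference from the paper's argument.
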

\begin{proof}
By $1)$ of proposition \ref{ind} $Q/Inn(Q)$ is reduced to one point. Therefore, by the previous proposition $G(Q)^{ab}=gr_1G(Q)$ is isomorphic to $\Z$. Since $grG(Q)$ is generated as a Lie ring by its degree $1$ part and the degree $1$ part is reduced to $\Z$, $gr_iG(Q)=0$ for $i>1$ (the Lie bracket is antisymetric). 
\end{proof}
\begin{proposition}
Let $Q$ be a quandle and $f:Q\to Conj(G)$ be a quandle morphism where $G$ is nilpotent. The morphism $f$ is constant on the maximal indecomposable subquandle $Q_x$ containing $x\in Q$.
\end{proposition}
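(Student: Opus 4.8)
The plan is to reduce everything to the enveloping group of the indecomposable piece $Q_x$ and to exploit the fact, recorded in the preceding lemma, that its lower central series collapses after the first term. First I would note that $Q_x$ is a subquandle of $Q$ which is indecomposable by Proposition \ref{ind}, so the restriction $f|_{Q_x}:Q_x\to Conj(G)$ is again a quandle morphism. By the universal property of the enveloping group (Proposition \ref{univ}), $f|_{Q_x}$ is induced by a unique group morphism $h:G(Q_x)\to G$; that is, $f|_{Q_x}=h\circ\varphi_{Q_x}$.

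Next I would bring in the nilpotency of $G$. Say $G$ has nilpotency class $c$, so that $\Gamma_{c+1}G=\{1\}$. Since any group morphism carries the lower central series into the lower central series, $h(\Gamma_{c+1}G(Q_x))\subseteq\Gamma_{c+1}G=\{1\}$, and hence $h$ factors through the quotient $G(Q_x)/\Gamma_{c+1}G(Q_x)$. The crucial step is then to identify this quotient. The preceding lemma gives $gr_iG(Q_x)=0$ for $i>1$, i.e. $\Gamma_iG(Q_x)=\Gamma_{i+1}G(Q_x)$ for every $i\geq 2$; thus the lower central series is constant from its second term onward, so $\Gamma_{c+1}G(Q_x)=\Gamma_2G(Q_x)$ for every $c\geq 1$. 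Consequently $G(Q_x)/\Gamma_{c+1}G(Q_x)=G(Q_x)^{ab}\cong\Z$, and $h$ factors through the abelianisation as $h=\bar h\circ\mathrm{ab}$ with $\mathrm{ab}:G(Q_x)\to G(Q_x)^{ab}$.

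Finally I would conclude by appealing to Corollary \ref{clas}. Writing $f(y)=\bar h\big(\mathrm{ab}(\varphi_{Q_x}(y))\big)$ for $y\in Q_x$, it suffices to know that $\mathrm{ab}(\varphi_{Q_x}(y))$ is independent of $y$. But $Q_x$ is indecomposable, so by Corollary \ref{clas} all of its elements have the same class in $G(Q_x)^{ab}$; therefore $f$ is constant on $Q_x$.

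The main obstacle is precisely the identification of the maximal nilpotent-class-$c$ quotient of $G(Q_x)$ with its abelianisation $\Z$, which is exactly where the collapse of the associated graded Lie ring (the preceding lemma) enters; once that is in place the remainder is a formal manipulation of the adjunction together with Corollary \ref{clas}. The only degenerate case to mention is $c=0$, i.e. $G$ trivial, for which the statement holds vacuously.
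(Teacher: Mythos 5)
Your proof is correct and follows essentially the same route as the paper's: restrict $f$ to $Q_x$, lift to $G(Q_x)$, use the collapse $\Gamma_nG(Q_x)=\Gamma_2G(Q_x)$ from the preceding lemma together with the nilpotency of $G$ to factor through $G(Q_x)^{ab}$, and conclude via Corollary \ref{clas}. Your version merely spells out the intermediate steps (the factorisation through the class-$c$ quotient and the identification with $\Z$) more explicitly than the paper does.
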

\begin{proof}
It follows from the previous lemma that $\Gamma_n G(Q_x)=\Gamma_2 G(Q_x)$ for $n\geq 2$. Hence, the morphism $f_x$ extending the restriction $f_{\vert Q_x}$ of $f$ to $G(Q_x)$ maps $\Gamma_n G(Q_x)$ to zero and hence $f_x$ factors throught $G(Q_x)^{ab}$ where $Q_x$ is reduced to one point (corollary \ref{clas}). This proves the proposition.
\end{proof}
\begin{corollary}
The indecomposable subquandles of $Q$ a subquandle of $Conj(G)$ for $G$ nilpotent are single points.
\end{corollary}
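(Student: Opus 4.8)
The plan is to deduce this corollary directly from the immediately preceding proposition by exploiting the injectivity of the inclusion. Since $Q$ is a subquandle of $Conj(G)$, the inclusion $\iota : Q \to Conj(G)$ is by definition a quandle morphism, and $G$ is nilpotent by hypothesis. First I would invoke the previous proposition with $f = \iota$: it tells us that $\iota$ is constant on the maximal indecomposable subquandle $Q_x$ containing any given $x \in Q$.

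The key observation is then that $\iota$ is injective, being an inclusion map. A map that is simultaneously constant and injective on a nonempty set forces that set to be a single point. Hence each maximal indecomposable subquandle $Q_x$ is a singleton. To reach the stated conclusion for an arbitrary indecomposable subquandle $S$ of $Q$, rather than only the distinguished maximal ones $Q_x$, I would pick a point $x \in S$ and use point $3)$ of Proposition \ref{ind}, which guarantees that $S$ is contained in the maximal indecomposable subquandle $Q_x$ containing $x$. Since $Q_x$ is a single point, so is $S$.

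I do not anticipate a genuine obstacle here, as the corollary is essentially a restatement of the preceding proposition once one records that the inclusion is injective. The only step requiring a moment of care is the passage from the maximal indecomposable subquandles $Q_x$, to which the proposition literally applies, to arbitrary indecomposable subquandles, and this is handled cleanly by the containment supplied in point $3)$ of Proposition \ref{ind}.
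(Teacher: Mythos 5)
Your argument is correct and is exactly the intended derivation: the paper states this corollary without proof as an immediate consequence of the preceding proposition, and applying that proposition to the (injective) inclusion $Q\to Conj(G)$ and then passing from the maximal indecomposable subquandles $Q_x$ to arbitrary ones via Proposition \ref{ind} is precisely the reasoning being left to the reader.
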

\begin{definition}[\cite{dar}]
A quandle $Q$ is nilpotent if $Inn(Q)$ is a nilpotent group.
\end{definition}
The first two conditions in the following proposition are known to be equivalent (\cite{dar}), we give a slight different proof.
\begin{proposition}
For $Q$ a quandle the following three conditions are equivalent : $G(Q)$ is nilpotent, $Q$ is nilpotent, $Q_{conj}$ is nilpotent.
\end{proposition}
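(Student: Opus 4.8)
The plan is to reduce the whole statement to one standard group-theoretic fact: if $N$ is a central subgroup of a group $G$, then $G$ is nilpotent if and only if $G/N$ is nilpotent. One direction is that quotients of nilpotent groups are nilpotent; for the converse, if $\Gamma_{c+1}(G/N)=1$ then $\Gamma_{c+1}(G)\subseteq N\subseteq\mathcal{Z}(G)$, whence $\Gamma_{c+2}(G)=[\Gamma_{c+1}(G),G]=1$. I will invoke this twice, comparing $G(Q)$ successively with $Inn(Q_{conj})$ and with $Inn(Q)$.

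First, the equivalence \emph{$G(Q)$ nilpotent $\iff$ $Q_{conj}$ nilpotent} is essentially immediate from corollary \ref{cen}, which presents $G(Q)\simeq G(Q_{conj})$ as a central extension $1\to \mathcal{Z}(G(Q))\to G(Q)\to Inn(Q_{conj})\to 1$. Applying the fact above with $N=\mathcal{Z}(G(Q))$, the group $G(Q)$ is nilpotent exactly when the quotient $Inn(Q_{conj})$ is, which by definition means $Q_{conj}$ is nilpotent.

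The core step is to produce, for an \emph{arbitrary} quandle $Q$ (not assumed injective), a surjective morphism $\pi:G(Q)\to Inn(Q)$ with central kernel. Since axiom b) is precisely the identity $L_{x\triangleright y}=L_xL_yL_x^{-1}$, the map $x\mapsto L_x$ is a quandle morphism $Q\to Conj(\mathrm{Sym}(Q))$, so by the universal property (proposition \ref{univ}) it extends to $\pi:G(Q)\to \mathrm{Sym}(Q)$ with image $Inn(Q)$, surjective because the $L_x$ generate $Inn(Q)$. The key identity is
$$g\,\varphi_Q(x)\,g^{-1}=\varphi_Q\big(\pi(g)(x)\big),\qquad g\in G(Q),\ x\in Q,$$
which holds for generators $g=\varphi_Q(y)$ by the defining relation $\varphi_Q(y)\varphi_Q(x)\varphi_Q(y)^{-1}=\varphi_Q(y\triangleright x)$ and propagates to all $g$ by multiplicativity. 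Hence if $\pi(g)=\mathrm{id}$ then $g$ commutes with every $\varphi_Q(x)$, and as $\varphi_Q(Q)$ generates $G(Q)$ we get $g\in\mathcal{Z}(G(Q))$; thus $\ker\pi\subseteq\mathcal{Z}(G(Q))$ and $\pi$ is a central extension.

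Applying the group-theoretic fact to $\pi$ gives \emph{$G(Q)$ nilpotent $\iff$ $Inn(Q)$ nilpotent}, that is, \emph{$G(Q)$ nilpotent $\iff$ $Q$ nilpotent}; combined with the first equivalence this yields the pairwise equivalence of all three conditions. The only genuinely delicate point is the central-kernel claim in the non-injective case: here $Q$ need not embed in $G(Q)$, so (unlike in lemma \ref{lem1}) one cannot identify the action with honest conjugation on a copy of $Q$, and centrality must be extracted from the conjugation identity above, noting that $\ker\pi$ may be a proper subgroup of $\mathcal{Z}(G(Q))$ yet is still central, which is all the argument requires.
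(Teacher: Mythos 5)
Your proof is correct, but the second half follows a genuinely different route from the paper's. Both arguments begin identically: corollary \ref{cen} exhibits $G(Q)\simeq G(Q_{conj})$ as a central extension of $Inn(Q_{conj})$, and the standard fact that a group is nilpotent iff its quotient by a central subgroup is nilpotent gives $G(Q)$ nilpotent $\iff$ $Q_{conj}$ nilpotent. Where you diverge is in linking this to $Inn(Q)$. The paper closes a cycle of one-way implications: the surjection $G(Q)\twoheadrightarrow Inn(Q)$ gives ($G(Q)$ nilpotent $\Rightarrow$ $Inn(Q)$ nilpotent), and then a surjection $Inn(Q)\twoheadrightarrow Inn(Q_{conj})$ — justified by checking that the generators $L_x$ stabilize the fibers of $Q\to Q_{conj}$ — gives ($Inn(Q)$ nilpotent $\Rightarrow$ $Inn(Q_{conj})$ nilpotent $\Rightarrow$ $G(Q)$ nilpotent). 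You instead prove the stronger statement that $\ker\bigl(G(Q)\to Inn(Q)\bigr)$ is central for an \emph{arbitrary} quandle, via the conjugation identity $g\varphi_Q(x)g^{-1}=\varphi_Q(\pi(g)(x))$, which is in effect lemma \ref{lem1} with the injectivity hypothesis removed (weakened only in that the kernel may be a proper central subgroup rather than the full center); this yields the biconditional $G(Q)$ nilpotent $\iff$ $Inn(Q)$ nilpotent in one stroke. What each approach buys: the paper's avoids any centrality verification in the non-injective case but must construct and justify the map $Inn(Q)\to Inn(Q_{conj})$; yours avoids that map entirely and produces a reusable structural fact (a central extension $1\to\ker\pi\to G(Q)\to Inn(Q)\to 1$ for every $Q$), at the cost of the propagation argument for the conjugation identity, which you carry out correctly. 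Your closing caveat — that one cannot simply quote lemma \ref{lem1} when $Q$ is not injective — is exactly the right point to flag.
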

\begin{proof}
By corollary \ref{cen}, $G(Q)\simeq G(Q_{conj})$ is a central extension of $Inn(Q_{conj})$. Hence $G(Q)$ is nilpotent is nilpotent if and only if $Inn(Q_{conj})$ is. One has a quandle morphism from $Q$ to $Inn(Q)$ mapping $x\in Q$ to the left translation $L_x$. This morphism induces a surjective morphism $G(Q) \to Inn(Q)$ since $Inn(Q)$ is generated by the left translations. This proves that if $G(Q)$ is nilpotent then $Inn(Q)$ is nilpotent. Now since the natural quandle morphism $Q\to Q_{conj}$ is surjective we have a surjective group morphism $Inn(Q)\to Inn(Q_{conj})$ mapping a translation by $x\in Q$ to the translation by the image of $x$ in $Q_{conj}$. Indeed an since the generators $L_x$ for $x\in Q$ of $Inn(Q)$ stabilize the fibers of $Q\to Q_{conj}$ the elements of $Inn(Q)$ do also. This proves that if $Inn(Q)$ is nilpotent $Inn(Q_{conj})$ is nilpotent and hence $G(Q)$ is nilpotent. We have proved the proposition.
\end{proof}
\begin{proposition}
A quandle $Q$ is nilpotent if and only if $Q_{conj}$ is a subquandle of a nilpotent group (for conjuguacy). If $Q$ is injective, then $Q$ is nilpotent if and only if $Q$ is a subquandle of a nilpotent group.
\end{proposition}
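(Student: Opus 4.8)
The plan is to reduce both assertions to the immediately preceding proposition, which tells us that $Q$, $Q_{conj}$ and $G(Q)\simeq G(Q_{conj})$ are nilpotent simultaneously, and then to supply the one genuinely new ingredient: the fact that being a subquandle (for conjugacy) of a nilpotent group forces $Inn$ to be nilpotent. I would treat the first statement as two implications and then derive the second statement formally.

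First I would dispatch the forward implication of the first claim. If $Q$ is nilpotent, then by the previous proposition $G(Q)\simeq G(Q_{conj})$ is a nilpotent group; since by definition $Q_{conj}$ is the subquandle $\varphi_Q(Q)$ of $Conj(G(Q))$, it is thereby exhibited as a subquandle (for conjugacy) of the nilpotent group $G(Q)$. No further work is needed in this direction.

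The reverse implication is where the real content lies, and I expect it to be the main obstacle. Suppose $Q_{conj}$ is a subquandle of $Conj(N)$ for some nilpotent group $N$, and let $H=\langle Q_{conj}\rangle$ be the subgroup of $N$ generated by the elements of $Q_{conj}$. For $x\in Q_{conj}$ the left translation $L_x$ is the restriction to $Q_{conj}$ of the conjugation $c_x:n\mapsto xnx^{-1}$ of $N$. Because $Q_{conj}$ is a subquandle it is closed under $\triangleright$, and — by quandle axiom (c) applied inside the subquandle — also under the inverse translation $L_x^{-1}:y\mapsto x^{-1}yx$; hence $c_x|_{Q_{conj}}$ and $c_{x^{-1}}|_{Q_{conj}}$ both preserve $Q_{conj}$. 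Consequently conjugation gives a well-defined homomorphism $H\to \mathrm{Sym}(Q_{conj})$, $h\mapsto c_h|_{Q_{conj}}$, whose image contains every $L_x$ and therefore equals $Inn(Q_{conj})$. This exhibits $Inn(Q_{conj})$ as a quotient of $H$. Since subgroups and quotients of nilpotent groups are nilpotent, $H$ and hence $Inn(Q_{conj})$ are nilpotent, so $Q_{conj}$ is nilpotent and, by the previous proposition, so is $Q$. The delicate point here is precisely the verification that conjugation by an arbitrary element of $H$ (not merely by a generator) stabilizes $Q_{conj}$, which is exactly what closure under $\triangleright$ together with axiom (c) secures.

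Finally the second statement follows formally. When $Q$ is injective, $\varphi_Q$ identifies $Q$ with $Q_{conj}=\varphi_Q(Q)$ as quandles, so the phrase "$Q_{conj}$ is a subquandle of a nilpotent group'' becomes "$Q$ is a subquandle of a nilpotent group,'' and the equivalence just proved gives exactly that $Q$ is nilpotent if and only if $Q$ is a subquandle of a nilpotent group. This completes the proof.
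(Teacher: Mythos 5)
Your proof is correct and follows essentially the same route as the paper: reduce to the preceding proposition and show that sitting inside $Conj(N)$ for $N$ nilpotent forces $Inn(Q_{conj})$ to be a subquotient of a nilpotent group. The only (immaterial) difference is your choice of auxiliary group — you conjugate by the subgroup $\langle Q_{conj}\rangle\le N$, while the paper uses the subgroup of inner automorphisms of $N$ stabilizing $Q_{conj}$; your explicit check, via quandle axiom (c), that conjugation by arbitrary elements of $\langle Q_{conj}\rangle$ preserves $Q_{conj}$ is a welcome detail the paper leaves implicit.
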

\begin{proof}
If $Q$ is nilpotent the $Q_{conj}$ is a subquandle of a nilpotent group $G(Q)$ by the last proposition. We prove the converse. Let $Q_{conj}$ be a subquandle of a nilpotent group $G$. An let $H$ be the group of inner automorphisms of $G$ stabilizing $Q_{conj}$. $H$ is nilpotent since it is a subgroup of the inner automorphisms of $G$ wich is a nilpotent group. Restricting the elements of $H$ to $Q$ we get a morphism $f$ from $H$ to the group of bijections of $Q$. Notice that the image under $f$ of the conjugacy by $x\in Q$ is $L_x$, hence $Inn(Q)$ is included in the image of $H$. This proves that $Inn(Q)$ is nilpotent.  
\end{proof}
\begin{proposition}
An injective finite quandle is nilpotent if and only if it is a subquandle of a finite nilpotent group.
\end{proposition}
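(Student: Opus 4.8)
The plan is to prove the two implications separately, reducing the nontrivial direction to the residual finiteness argument already carried out in Proposition \ref{fin}, enhanced only by the observation that every quotient of a nilpotent group is again nilpotent.

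For the "if" direction, suppose $Q$ is (isomorphic to) a subquandle of $Conj(H)$ for some finite nilpotent group $H$. Since a finite nilpotent group is in particular a nilpotent group, the preceding proposition (its injective case) immediately yields that $Q$ is nilpotent. This direction requires no new work beyond invoking the previous result.

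For the "only if" direction, suppose $Q$ is a finite injective nilpotent quandle. By the equivalence of the three nilpotency conditions established just above, $G(Q)$ is a nilpotent group; since $Q$ is finite, $G(Q)$ is finitely generated, hence a finitely generated nilpotent group. I would then rerun the proof of Proposition \ref{fin} essentially verbatim: $G(Q)$ is residually finite, so for each pair $(x,y)\in Q^2$ there is a finite quotient $G(x,y)$ of $G(Q)$ in which the classes of $x$ and $y$ are distinct, and the diagonal morphism $G(Q)\to \prod_{(x,y)\in Q^2} G(x,y)$ restricts to an injective quandle morphism on $Q$. The one additional point to record is that each $G(x,y)$, being a quotient of the nilpotent group $G(Q)$, is itself nilpotent, and a finite direct product of finite nilpotent groups is again a finite nilpotent group. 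Hence $\prod_{(x,y)\in Q^2} G(x,y)$ is a finite nilpotent group and $Q$ embeds as a subquandle (for conjugacy) of it.

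I do not expect a genuine obstacle here: the entire content is the remark that nilpotency is inherited by quotients and preserved under finite direct products, so the finite group produced by the residual finiteness argument is automatically nilpotent, with no need to control its nilpotency class. The only thing worth double-checking is that the target group is genuinely finite, which holds because $Q$ is finite (so $Q^2$ is finite) and each factor $G(x,y)$ is a finite group.
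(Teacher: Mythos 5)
Your proposal is correct and follows essentially the same route as the paper: the ``if'' direction is the previous proposition, and the ``only if'' direction reruns the residual-finiteness argument of Proposition \ref{fin}, observing that quotients of the nilpotent group $G(Q)$ are nilpotent and that a finite product of finite nilpotent groups is finite nilpotent. The paper's proof is just a terser statement of exactly this argument.
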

\begin{proof}
If the injective finite quandle is a subquandle of a finite nilpotent group then the quandle is nilpotent by the last proposition. We prove the converse using the proof of proposition \ref{fin} and by taking into account that $G(Q)$ is nilpotent if $Q$ is nilpotent and hence its quotients are nilpotent.
\end{proof}
\begin{remark}
If we define solvable quandles as quandles $Q$ where $Inn(Q)$ is solvable, the results of the last three propositions hold for solvable quandles if we replace the word nilpotent with solvable. Note also that for $Q$ finite, $G(Q)$ is polycyclic if and only if $Q$ is solvable. Indeed, as we have seen in the proof of theorem \ref{rep} the center of $G(Q)$ is a finitely generated abelian group and in the same section we prove that $G(Q)$ is an extension of $Inn(Q_{conj})$ by the center.
\end{remark}
\section{Algebraic invariant of the enveloping group} 
We recall that we denote by $gr G(Q)$ the associated graded of $G(Q)$ with respect to the lower central series $(\Gamma_n G(Q))_n$.
\begin{proposition}\label{gr}
\begin{itemize}
\item[1)] $gr_1 G(Q) \simeq  \Z (Q/Inn(Q))$.
\item[2)] For $Q$ finite and $i>1$, $gr_i G(Q)$ is a finite torsion group (eventually null). 
\end{itemize}
\end{proposition}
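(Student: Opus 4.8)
The plan is to treat the two parts separately, settling part 1 directly from the definitions and reducing part 2 to a classical finiteness theorem of Schur.

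For part 1, I would simply unwind the definition of the associated graded. Since $\Gamma_1 G(Q)=G(Q)$ and $\Gamma_2 G(Q)=[G(Q),G(Q)]$, the first graded piece is
$$gr_1 G(Q)=\Gamma_1 G(Q)/\Gamma_2 G(Q)=G(Q)^{ab}.$$
Proposition \ref{abel} identifies $G(Q)^{ab}$ with $\Z(Q/Inn(Q))$, which gives the claim. Observe that no finiteness hypothesis enters here, consistent with the fact that part 1 is stated unconditionally while only part 2 assumes $Q$ finite.

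For part 2, the starting point is Corollary \ref{cen}: $G(Q)\simeq G(Q_{conj})$ is a central extension of $Inn(Q_{conj})$ by the center $\mathcal{Z}(G(Q))$. When $Q$ is finite, $Q_{conj}=\varphi_Q(Q)$ is a finite set, so $Inn(Q_{conj})$ is a subgroup of the symmetric group on $Q_{conj}$ and is therefore finite. Consequently the center $\mathcal{Z}(G(Q))$ has finite index in $G(Q)$, that is, $G(Q)/\mathcal{Z}(G(Q))$ is finite. At this point I would invoke Schur's theorem, which asserts that if the center of a group has finite index then its derived subgroup is finite. Applied here it yields that $\Gamma_2 G(Q)=[G(Q),G(Q)]$ is a \emph{finite} group. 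This is the heart of the argument and essentially its only nonformal input; everything else is bookkeeping.

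The remainder is then mechanical. Because the lower central series is descending, $\Gamma_i G(Q)\subseteq \Gamma_2 G(Q)$ for every $i\ge 2$, so each $\Gamma_i G(Q)$ is finite, and hence $gr_i G(Q)=\Gamma_i G(Q)/\Gamma_{i+1} G(Q)$ is a finite, in particular torsion, group for $i>1$. Finally, the terms $\Gamma_2 G(Q)\supseteq \Gamma_3 G(Q)\supseteq\cdots$ form a descending chain of subgroups of the finite group $\Gamma_2 G(Q)$, so the chain stabilizes after finitely many steps; from that index onward the successive quotients vanish, giving the ``eventually null'' assertion. Thus the main obstacle is locating and correctly applying Schur's theorem, after which the statement follows from the structure of the central extension in Corollary \ref{cen} and the definition of $gr$.
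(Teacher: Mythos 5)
Your argument is correct, but for part 2 it takes a genuinely different route from the paper. The paper never mentions Schur's theorem: it works directly inside the graded Lie ring. For $x,y\in Q$ finite, the left translation $L_x$ has finite order, so $x^m\triangleright y=y$ for some $m$, whence $(x^m,y)=1$ in $G(Q)$ and therefore $m[\bar{x},\bar{y}]=0$ in $gr\,G(Q)$; since $gr\,G(Q)$ is generated as a Lie ring by $gr_1 G(Q)=G(Q)^{ab}$, which is a finitely generated $\Z$-module, an induction on the degree shows each $gr_i G(Q)$ ($i>1$) is a finite torsion group. Your proof instead feeds the central extension of Corollary \ref{cen} (with $Inn(Q_{conj})$ finite, so the center has finite index) into Schur's theorem to get finiteness of $\Gamma_2 G(Q)$ itself, and then reads off the graded quotients. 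The trade-off: the paper's argument is elementary and self-contained, producing explicit torsion bounds on the brackets without invoking any external theorem; yours imports a classical result but obtains a strictly stronger conclusion, namely that the whole derived subgroup $\Gamma_2 G(Q)$ is finite for \emph{any} finite quandle (the paper only establishes this later, and only under the additional hypothesis that $Q$ is nilpotent), and as a consequence that the chain $(\Gamma_i G(Q))_{i\ge 2}$ stabilizes, so the $gr_i$ genuinely vanish for large $i$ --- something the paper's proof does not show (its parenthetical ``eventually null'' is best read as ``possibly null''). Part 1 of your proof coincides with the paper's.
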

\begin{proof}
$1)$ follows from the definition of $gr G(Q)$ and proposition \ref{abel}. Now assume $Q$ is finite. For $x$ and $y$ in $Q$, denote by $x^n\triangleright y$ the $n$ times iteration $(x\triangleright(x\triangleright (\cdots (x\triangleright(x \triangleright y))\cdots)))$. Since $Q$ is finite $x^m\triangleright y=y$ for some $m$. Hence in $G(Q)$ $x^m y x^{-m}=y$ and hence we have that $(x^m,y)=1$ where$(-,-)$ is the commutator. This proves that : 
$$m[\bar{x},\bar{y}]=0,$$
where $\bar{x}$ and $\bar{y}$ are the classes of $x,y$ in the $G(Q)^{ab}$ and $[-,-]$ denotes the Lie bracket of $grG(Q)$. Therefore, $[\bar{x},\bar{y}]$ is a torsion element.  Since $Q$ genrates $G(Q)$ and $grG(Q)$ is generaterd as a Lie ring by $gr_1G(Q)=G(Q)^{ab}$ wich is finitely generated as a $\Z$ module, $gr_2 G(Q)$ is finite a torsion group. We then prove by induction using the fact that $gr G(Q)$ is generated as a Lie ring by its degree $1$ part that $gr_i G(Q)$ is a finite torsion group for all $i>2$.
\end{proof}
\begin{proposition}
The Malcev Lie algebra of $G(Q)$ for $Q$ finite is abelian and is isomorphic to $G(Q)^{ab}\otimes \mathbb{Q}\simeq \Z (Q/Inn(Q))\otimes \mathbb{Q}$.
\end{proposition}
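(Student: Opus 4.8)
The plan is to reduce everything to the computation of the associated graded Lie ring $grG(Q)$ already carried out in Proposition \ref{gr}, and then to invoke the standard structural fact that the associated graded of the Malcev Lie algebra recovers the rationalized associated graded of the group. Since $G(Q)$ is finitely presented, hence finitely generated, for $Q$ finite, the Malcev machinery applies to $G=G(Q)$.

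First I recall the relevant fact about the Malcev Lie algebra $\mathfrak{m}(G)$ of a finitely generated group $G$: it carries a complete descending filtration $\mathfrak{m}(G)=\mathfrak{m}_1\supseteq \mathfrak{m}_2\supseteq\cdots$, with $\mathfrak{m}(G)=\varprojlim_n \mathfrak{m}(G)/\mathfrak{m}_n$, whose associated graded Lie algebra is naturally isomorphic to $grG\otimes_{\Z}\mathbb{Q}$. I then feed in Proposition \ref{gr}. It gives $gr_1 G(Q)\simeq \Z(Q/Inn(Q))$, so $gr_1 G(Q)\otimes \mathbb{Q}\simeq \mathbb{Q}(Q/Inn(Q))$, while for $i>1$ the group $gr_i G(Q)$ is a finite torsion group and hence $gr_i G(Q)\otimes\mathbb{Q}=0$. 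Consequently the associated graded of $\mathfrak{m}(G(Q))$ is concentrated in degree $1$, where it equals $\mathbb{Q}(Q/Inn(Q))$, and is therefore abelian, since any bracket of degree-$1$ elements lands in the vanishing degree-$2$ part.

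The remaining step is to pass from the associated graded back to $\mathfrak{m}(G(Q))$ itself. Because $\mathfrak{m}_i/\mathfrak{m}_{i+1}=0$ for $i\geq 2$, the filtration satisfies $\mathfrak{m}_2=\mathfrak{m}_3=\cdots$; completeness, $\mathfrak{m}(G(Q))=\varprojlim_n \mathfrak{m}(G(Q))/\mathfrak{m}_n$, then forces $\mathfrak{m}_2=0$, as the inverse system is eventually constant equal to $\mathfrak{m}(G(Q))/\mathfrak{m}_2$. Hence $\mathfrak{m}(G(Q))=\mathfrak{m}_1/\mathfrak{m}_2\simeq gr_1 G(Q)\otimes\mathbb{Q}\simeq \mathbb{Q}(Q/Inn(Q))$, with $[\mathfrak{m}(G(Q)),\mathfrak{m}(G(Q))]\subseteq \mathfrak{m}_2=0$, so the Malcev Lie algebra is abelian. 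Finally, Proposition \ref{abel} identifies $G(Q)^{ab}\otimes\mathbb{Q}\simeq \Z(Q/Inn(Q))\otimes\mathbb{Q}=\mathbb{Q}(Q/Inn(Q))$, yielding the asserted isomorphisms.

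I expect the main obstacle to be the careful invocation of the isomorphism $gr\,\mathfrak{m}(G)\simeq grG\otimes\mathbb{Q}$ together with the completeness argument that upgrades the statement ``the associated graded is concentrated in degree $1$'' to ``the Malcev Lie algebra itself is finite dimensional, abelian, and equal to its degree-$1$ part''. Once this is in place, the torsion and finiteness input of Proposition \ref{gr} makes the vanishing of all higher brackets, and hence abelianness, automatic.
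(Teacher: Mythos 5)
Your proof is correct and follows essentially the same route as the paper: identify $gr$ of the Malcev Lie algebra with $grG(Q)\otimes\mathbb{Q}$, use Proposition \ref{gr} to kill the graded pieces in degree $\geq 2$, and conclude that the filtration stabilizes at $F_2=0$. The only cosmetic difference is that you deduce $F_2=0$ from completeness (the inverse-limit description), whereas the paper uses the separatedness condition $\bigcap_r F_rL=0$ directly and explicitly sets completeness aside; both yield the same conclusion.
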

\begin{proof}
The Malcev Lie algebra $L$ of $G(Q)$ is a filtered $\mathbb{Q}$-Lie algebra having a seperated filtration $(F_rL)_{r\geq 1}$ : $F_1L=L$, $F_rL \subset F_{r+1}L$, $(F_rL,F_sL) \subset F_{r+s}L$ and $\cap_{r\geq 1} F_r L=0$. $L$ is also complete with respect to the filtration (see \cite{RHT} for more details), but this will not be used. The associated graded $gr L$ of $L$ with respect to this filtration is isomorphic as a Lie algebra to the Lie algebra $grG(Q)\otimes \mathbb{Q}$. Since we assume that $Q$ is finite by the previous proposition we get that $gr_1 L\simeq  G(Q)^{ab}\otimes \mathbb{Q}$ and $gr_iL \simeq 0$ for $i\geq 2$. Hence $F_2L=F_3L=F_4L=\cdots$. But the intersection of the $F_rL$ is null. Therefore $F_2L=0$. This proves the proposition since $(L,L) \subset F_2L$ and $L/F_2L= gr_1L \simeq G(Q)^{ab}\otimes \mathbb{Q}$.
\end{proof}

\begin{proposition}
If $Q$ is a finite quandle then the rational cohomology ring $H^*(G(Q),\mathbb{Q})$ of the group $G(Q)$  is isomorphic to the exterior algebra $\Lambda^*\mathbb{Q}(Q/Inn(Q))$, where $$\mathbb{Q}(Q/Inn(Q))=\Z(Q/Inn(Q))\otimes \mathbb{Q}.$$
\end{proposition}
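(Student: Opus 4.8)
The plan is to exploit the fact that $G(Q)$ is virtually abelian with a finite-index abelian subgroup that is moreover \emph{central}, and then read off the rational cohomology by transfer. Write $G=G(Q)$ and $\mathcal{Z}=\mathcal{Z}(G)$ for its center. By Corollary \ref{cen}, $G\simeq G(Q_{conj})$ sits in a central extension $1\to \mathcal{Z}\to G\to Inn(Q_{conj})\to 1$ with $Inn(Q_{conj})$ finite (as $Q$, hence $Q_{conj}$, is finite); and, as observed in the proof of Theorem \ref{rep}, $\mathcal{Z}$ is a finitely generated abelian group of finite index in $G$. Thus $\mathcal{Z}$ is a finite-index normal subgroup which, being the center, is fixed pointwise under the conjugation action of $G$.

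Next I would compare the rational cohomology of $G$ with that of $\mathcal{Z}$ through restriction. Since the index $[G:\mathcal{Z}]=|Inn(Q_{conj})|$ is finite and invertible in $\mathbb{Q}$, the transfer argument shows that restriction $\mathrm{res}:H^*(G,\mathbb{Q})\to H^*(\mathcal{Z},\mathbb{Q})$ is an injective graded-ring homomorphism whose image is the ring of invariants $H^*(\mathcal{Z},\mathbb{Q})^{G/\mathcal{Z}}$. Because $\mathcal{Z}$ is central, the conjugation action of $G$ on $\mathcal{Z}$ is trivial, hence so is the induced action on $H^*(\mathcal{Z},\mathbb{Q})$; therefore the invariants are all of $H^*(\mathcal{Z},\mathbb{Q})$ and $\mathrm{res}$ is an isomorphism of graded rings $H^*(G,\mathbb{Q})\cong H^*(\mathcal{Z},\mathbb{Q})$.

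It then remains to identify $H^*(\mathcal{Z},\mathbb{Q})$. Writing $\mathcal{Z}\cong \Z^r\oplus T$ with $T$ finite, the finite summand contributes nothing rationally, so $H^*(\mathcal{Z},\mathbb{Q})\cong H^*(\Z^r,\mathbb{Q})=\Lambda^*(\mathbb{Q}^r)$, the exterior algebra on its degree-one part. To pin down $r$ and the degree-one space intrinsically, I would use that the isomorphism above is degree-preserving, so $H^1(G,\mathbb{Q})\cong H^1(\mathcal{Z},\mathbb{Q})$; and $H^1(G,\mathbb{Q})=\mathrm{Hom}(G^{ab},\mathbb{Q})$ is identified by Proposition \ref{abel} with $\mathbb{Q}(Q/Inn(Q))$, of dimension $n=|Q/Inn(Q)|$. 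Hence $r=n$, and since the ring isomorphism transports the exterior structure, $H^*(G,\mathbb{Q})$ is the exterior algebra on its degree-one part $H^1(G,\mathbb{Q})\cong \mathbb{Q}(Q/Inn(Q))$, giving $H^*(G(Q),\mathbb{Q})\cong\Lambda^*\mathbb{Q}(Q/Inn(Q))$.

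The genuinely delicate points are the two inputs to the middle step: verifying that the finite-index restriction map is an isomorphism onto the invariants (the transfer/averaging argument, valid precisely because $|Inn(Q_{conj})|$ is invertible in $\mathbb{Q}$), and that the relevant action is trivial because $\mathcal{Z}$ is the center. Everything else is the bookkeeping of matching the rank $r$ of the center with the orbit count $n$ via the abelianization, which Proposition \ref{abel} supplies. I would note that this is consistent with the previous proposition: the Malcev Lie algebra of $G(Q)$ being abelian of dimension $n$ predicts exactly the exterior algebra $\Lambda^*\mathbb{Q}(Q/Inn(Q))$.
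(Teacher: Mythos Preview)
Your argument is correct and follows the same strategy as the paper: exploit the central extension $1\to\mathcal{Z}\to G(Q)\to Inn(Q_{conj})\to 1$ with finite quotient, use triviality of the conjugation action on $\mathcal{Z}$, identify $H^*(\mathcal{Z},\mathbb{Q})$ as an exterior algebra, and pin down the rank via $H^1\cong G(Q)^{ab}\otimes\mathbb{Q}$.

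The only technical difference is in how the comparison $H^*(G,\mathbb{Q})\cong H^*(\mathcal{Z},\mathbb{Q})$ is established. The paper runs the Hochschild--Serre spectral sequence (which degenerates since $H^p(G/\mathcal{Z},\mathbb{Q})=0$ for $p>0$) to get a graded vector-space isomorphism, and then combines this with injectivity of restriction to upgrade to a ring isomorphism. You instead invoke transfer directly to conclude that restriction is a ring isomorphism onto the invariants $H^*(\mathcal{Z},\mathbb{Q})^{G/\mathcal{Z}}$, which equals all of $H^*(\mathcal{Z},\mathbb{Q})$ by centrality. Your route is slightly more economical, but the two arguments are essentially the same observation packaged differently; the spectral-sequence collapse and the transfer isomorphism onto invariants are equivalent manifestations of the vanishing of higher rational cohomology of the finite quotient.
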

\begin{proof}
We can assume that $Q$ is injective since $G(Q_{conj})\simeq G(Q)$. We do so. Let $\mathcal{Z}$  be the center of $G(Q)$.  Since we assume $Q$ is finite $Inn(Q)$ is finite and $H^*(G(Q)/\mathcal{Z},\mathbb{Q})=H^0(G(Q)/\mathcal{Z},\mathbb{Q})=\mathbb{Q}$ ($G/\mathcal{Z}\simeq Inn(Q)$ by lemma \ref{lem1}). The center is of finite index and $G(Q)$ is finitely generated. Hence, $\mathcal{Z}$ is a finitely generated abelian group. In particular, we have a ring isomorphism $H^*(\mathcal{Z},\mathbb{Q})\simeq H^*(\Z, \mathbb{Q})^{\otimes r}$ for some $r$. Hence, the ring $H^*(\mathcal{Z},\mathbb{Q}) $ is isomorphic to $\Lambda^* \mathbb{Q}^r$ for some $r$. Now $G(Q)/\mathcal{Z} $ acts trivially in the rational cohomology of $\mathcal{Z}$. Hence, the second page of the Hochschil-serre spectral sequence for $\mathcal{Z}$ and $G(Q)$  over the rationals is given by :
$$E_2^{p,q}\simeq H^p(G(Q)/\mathcal{Z},\mathbb{Q})\otimes H^q(\mathcal{Z},\mathbb{Q}).$$
The spectral sequence collapses at the second page and $H^*(G(Q),\mathbb{Q})\simeq H^*(\mathcal{Z},\mathbb{Q})$ as graded vector spaces.  This with the fact that the restriction morphism $H^*(G(Q),\mathbb{Q})\to H^*(\mathcal{Z},\mathbb{Q})$ is an injective ring morphism and the fact that $ H^*(\mathcal{Z},\mathbb{Q})\simeq \Lambda^* \mathbb{Q}^r$ for some $r$, gives that $H^*(G(Q),\mathbb{Q})$ is isomorphic as ring to $\Lambda^* \mathbb{Q}^r$ for some $r$.  But, as we have seen $G(Q)^{ab}\simeq \Z(Q/Inn(Q))$. Hence $H^1(G(Q),\mathbb{Q})\simeq G(Q)^{ab}\otimes \mathbb{Q}$ and this determines $r$. We have proved the propostion
\end{proof}
We note that it follows from the proof that the free part of the center is isomorphic to $\Z(Q/Inn(Q))$. We give two remarks for $Q$ finite :
\begin{remark}
The cohomololgy of $G(Q)$ over the rationals correspond to the cohomology of its Malcev Lie algebra.
\end{remark}
\begin{remark}
The cohomology ring $H^*(G(Q),\mathbb{Q})$ is a Sulivan algebra hence equipped with the zero differential it is the minimal model of the classifying space $BG(Q)$ and $BG(Q)$ is formal in the sense of rational homotopy theory. 
\end{remark}
\begin{proposition}
If $Q$ is a finite nilpotent quandle, then $\Gamma_2G(Q)$ is a finite group it is the torsion subgroup of $G(Q)$.
\end{proposition}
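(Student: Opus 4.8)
The plan is to combine the nilpotency of $G(Q)$ with the structure of its lower central series established in proposition \ref{gr}. First I would record that, since $Q$ is nilpotent, the proposition identifying the three nilpotency conditions shows that $G(Q)$ is a nilpotent group; moreover $G(Q)$ is finitely generated because $Q$ is finite. In particular the lower central series is finite, say $\Gamma_{c+1}G(Q)=\{1\}$ for some $c$, and we have the descending chain $\Gamma_2 G(Q)\supseteq \Gamma_3 G(Q)\supseteq \cdots \supseteq \Gamma_{c+1}G(Q)=\{1\}$ whose successive quotients are the graded pieces $gr_iG(Q)=\Gamma_iG(Q)/\Gamma_{i+1}G(Q)$.

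The finiteness of $\Gamma_2 G(Q)$ then follows at once: by proposition \ref{gr} each factor $gr_iG(Q)$ with $i\geq 2$ is a finite torsion group, and a group possessing a finite series with finite factors is itself finite. Thus $\Gamma_2 G(Q)$ is a finite group.

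It remains to identify $\Gamma_2 G(Q)$ with the torsion subgroup $T$ of $G(Q)$. One inclusion is immediate: $\Gamma_2 G(Q)$ is finite, so every one of its elements has finite order, giving $\Gamma_2 G(Q)\subseteq T$. For the reverse inclusion I would use that the abelianization $G(Q)^{ab}=G(Q)/\Gamma_2 G(Q)$ is isomorphic to the free abelian group $\Z(Q/Inn(Q))$ by proposition \ref{abel}, and is therefore torsion-free. Hence any torsion element of $G(Q)$ maps to $0$ in $G(Q)^{ab}$, i.e. lies in $\Gamma_2 G(Q)$, so $T\subseteq \Gamma_2 G(Q)$. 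Combining the two inclusions shows that the set of torsion elements of $G(Q)$ is exactly $\Gamma_2 G(Q)$, which in particular is a finite subgroup.

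The argument is essentially formal once proposition \ref{gr} is in hand; no step requires a genuinely new idea. The one point that must not be overlooked is that the finiteness of $\Gamma_2 G(Q)$ relies on nilpotency to guarantee that only finitely many graded pieces $gr_iG(Q)$ are nonzero, so that the chain above has only finitely many factors. The torsion-freeness of $G(Q)^{ab}\simeq\Z(Q/Inn(Q))$ is what forces every torsion element into the commutator subgroup, and this is the only place where the explicit description of the abelianization from proposition \ref{abel} is used.
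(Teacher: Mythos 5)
Your proof is correct and takes essentially the same route as the paper: both establish finiteness of $\Gamma_2 G(Q)$ by running the terminating lower central series through the finite graded pieces $gr_iG(Q)$ ($i\geq 2$) from proposition \ref{gr}, and both identify $\Gamma_2G(Q)$ with the torsion subgroup via the torsion-freeness of $G(Q)^{ab}\simeq \Z(Q/Inn(Q))$. Your write-up is in fact slightly more complete, since you explicitly note the easy inclusion $\Gamma_2G(Q)\subseteq T$ which the paper leaves implicit.
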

\begin{proof}
Since we assume $Q$ is nilpotent $ \Gamma_mG(Q)=0$ some $m>4$. It follows, using proposition \ref{gr} (we assumed that $Q$ is finite) that $\Gamma_2 G(Q)$ is a finite group. Indeed, $\Gamma_2G(Q)/\Gamma_nG(Q)$ is an extension of $\Gamma_2 G(Q)/ \Gamma_{n-1} G(Q)$ by $gr_{n-1} G(Q)$. Hence $\Gamma_2G(Q)/\Gamma_m G(Q)=\Gamma_2 G(Q)$ is obtained as iterated extensions of the finite group $\Gamma_2 G(Q)/\Gamma_3 G(Q)$ by the finite groups $gr_i G(Q)$ for $3\leq i \leq m-1$. This proves that $\Gamma_2 G(Q)$ is a finite group. The fact that it contains all the torsion elements is due to the fact that it is the kernel of a morphism from $G(Q)$ to a free $\Z$-module : $G(Q)\to G(Q)/\Gamma_2 G(Q) \simeq \Z (Q/Inn(Q))$. 
\end{proof}
\begin{corollary}
For $Q$ finite and nilpotent the group $G(Q)$ is an extension of $\Z(Q/Inn(Q))$ by a finite group the group $\Gamma_2 G(Q)$.
\end{corollary}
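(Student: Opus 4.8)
The plan is to assemble the required short exact sequence directly from results already in hand, so this is essentially a bookkeeping corollary. First I would note that $\Gamma_2 G(Q)$ is by definition the commutator subgroup of $G(Q)$, so that the quotient $G(Q)/\Gamma_2 G(Q)$ coincides with the abelianisation $G(Q)^{ab}$. Proposition \ref{abel} then identifies this quotient with the free abelian group $\Z(Q/Inn(Q))$.

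Next I would invoke the immediately preceding proposition, which shows that for $Q$ finite and nilpotent the subgroup $\Gamma_2 G(Q)$ is finite. Combining these two facts, the canonical projection yields a short exact sequence
$$1 \longrightarrow \Gamma_2 G(Q) \longrightarrow G(Q) \longrightarrow \Z(Q/Inn(Q)) \longrightarrow 1,$$
which exhibits $G(Q)$ as an extension of $\Z(Q/Inn(Q))$ by the finite group $\Gamma_2 G(Q)$, exactly as claimed.

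Since both ingredients are already established, there is no genuine obstacle here. The only point worth checking is that the identification $G(Q)/\Gamma_2 G(Q) \simeq \Z(Q/Inn(Q))$ is precisely the abelianisation isomorphism of Proposition \ref{abel}; this is immediate because $\Gamma_2 G(Q)$ is exactly the commutator subgroup whose quotient defines $G(Q)^{ab}$. The result therefore follows at once from the preceding proposition together with the description of $G(Q)^{ab}$.
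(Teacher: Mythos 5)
Your proposal is correct and is exactly the argument the paper intends: the corollary is stated without proof precisely because it follows immediately from the preceding proposition (finiteness of $\Gamma_2 G(Q)$) together with Proposition \ref{abel} identifying $G(Q)/\Gamma_2 G(Q)=G(Q)^{ab}$ with $\Z(Q/Inn(Q))$. Nothing is missing.
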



\begin{thebibliography}{test}
 \bibitem[Mal40]{Mal}
 A. I. Malcev, \newblock On the faithful representation of infinite groups by matrices. \newblock Mat. Sb. 8 (50) (1940), 405-422; English transl., Amer. Math. Soc. Transl. (2) 45 (1965), 1-18.
%\bibitem[Aus67]{Aus}
%Auslander, Louis. \newblock On a Problem of Philip Hall.\newblock Annals of Mathematics, vol. 86, no. 1, pp. 112–16, 1967.
%\bibitem[Ja45]{Jac}
%\newblock Jacobson, N., \newblock Structure Theory for Algebraic Algebras of Bounded Degree. Annals of Mathematics, vol. 46, no. 4, pp. 695–707, 1945.
%\bibitem[Ka48]{Ka1}
%Irving Kaplansky, \newblock Rings with a polynomial identity \newblock Bull. Amer. Math. Soc. 54, 575–580, 1948.
%\bibitem[Ka49]{Ka2}
%Irving Kaplansky, \newblock Groups with representations of bounded degree \newblock Canadian J. Math vol. 1, 105-112,  1949.
\bibitem[Qu69]{RHT}
Daniel G. Quillen, \newblock Rational homotopy theory. \newblock Annals of Mathematics, volume 90, Pages 205-295, 1969.
\bibitem[JO82]{DJ}
D. Joyce,\newblock A classifying invariant of knots, the knot quandle \newblock J. Pure Appl. Algebra 23, pages 37–65, 1982.
\bibitem[LM85]{Mag}
A. Lubotzky and A. Magid. \newblock Varieties of representations of finitely generated groups. \newblock Mem. Am. Math. Soc.336, 1985.
\bibitem[Br88]{Br}
E. Brieskorn, \newblock Automorphic sets and singularities.\newblock Contemp. Math. 78, pages 45–115, 1988.
\bibitem[FR92]{FR}
R. Fenn, C. Rourke, Racks and links in codimension two, J. Knot Theory Ramifications 1 (4), pages 343–406, 1992.
\bibitem[Mac98]{MC} 
Saunders Mac Lane \newblock Categories for the Working Mathematician. \newblock second edition, Graduate Texts in Mathematics, Springer New York, 1998.
\bibitem[AG03]{deco}
Nicol\'as Andruskiewitsch and Mat\'ias Gra\~na, \newblock From racks to pointed Hopf algebras. \newblock Advances in Mathematics, Volume 178, Issue 2, Pages 177-243, 2003.
\bibitem[ME18]{rep} 
Elhamdadi, M. and Moutuou, E. kaïoum M.. \newblock Finitely stable racks and rack representations. \newblock  Communications in Algebra, 46(11), 4787–4802, 2018.
\bibitem[Dar22]{dar}
Jacques Darné, \newblock Nilpotent quandles. \newblock arxiv, 2022
\bibitem[LM21]{ab}
Victoria Lebed and Arnaud Mortier, \newblock Abelian quandles and quandles with abelian structure group. \newblock Journal of Pure and Applied Algebra, volume 225, 2021.



\end{thebibliography}
 \end{document}